\definecolor{xdxdff}{rgb}{0.66,0.66,0.66}
\definecolor{zzzzzz}{rgb}{0.6,0.6,0.6}
\newcommand{\cF}{\mathcal{F}}
\newcommand{\R}{\mathbb{R}}
\newcommand{\Z}{\mathbb{Z}}
\newcommand{\N}{\mathbb{N}}
\renewcommand{\H}{\mathbb{H}}
\newtheorem{theorem}{Theorem}
\newtheorem{proposition}{Proposition}
\newtheorem{corollary}{Corollary}
\theoremstyle{definition}
\newtheorem{definition}{Definition}
\newtheorem{example}{Example}
\newtheorem{remark}{Remark}
\newtheoremstyle{named}{}{}{\itshape}{}{\bfseries}{.}{.5em}{\thmnote{#3} #1}
\theoremstyle{named}
\newtheorem*{namedlemma}{Lemma}
\newtheorem*{property}{Property}
\begin{document}  

    \title[Minimality of the horocycle flow]{Minimality of the horocycle flow on  laminations by hyperbolic
    surfaces with non-trivial topology}

%
%
%
%
%
%

\author[F. Alcalde]{Fernando Alcalde Cuesta$^{1,2}$}
\address{$^1$~GeoDynApp - ECSING Group, Spain. \vspace*{-2ex}}
\address{$^2$~Instituto de Matem\'aticas, Universidade de Santiago de Compostela, E-15782, Santiago de Compostela, Spain. }
\email{fernando.alcalde@usc.es}

\author[F. Dal'Bo]{Fran\c{c}oise Dal'Bo$^3$}
\address{$^3$~Institut de Recherche Math\'ematiques de Rennes,
 Universit\'e de  Rennes 1, F-35042 Rennes, France.}
\email{francoise.dalbo@univ-rennes1.fr}

\author[M. Mart\'{\i}nez]{Matilde Mart\'{\i}nez$^4$}
\address{$^4$~Instituto de Matem\'atica y Estad\'{\i}stica Rafael Laguardia, Facultad de Ingenier\'{\i}a,
Universidad de la Rep\'ublica, J.Herrera y Reissig 565, C.P.11300 Montevideo, Uruguay.}
\email{matildem@fing.edu.uy}

\author[A. Verjovsky]{Alberto Verjovsky$^5$}
\address{$^5$~Universidad Nacional Aut\'onoma de M\'exico,
Apartado Postal 273, Admon. de correos \#3, C.P. 62251 Cuernavaca,
Morelos, Mexico.}
\email{alberto@matcuer.unam.mx}

\date{}
\thanks{}

\keywords{}

\subjclass[2010]{}

\begin{abstract}
We consider a minimal compact lamination by hyperbolic surfaces. We prove that if 
 no leaf is simply connected,
then the horocycle flow on its unitary tangent bundle is minimal.
\end{abstract}

\maketitle

\section{Introduction} \label{Sintro}

The geodesic and horocycle flows on hyperbolic surfaces are two classical examples of flows in homogeneous
spaces. Their dynamical
and ergodic properties were studied in the 1930s by E. Hopf and G.A. Hedlund.

A compact hyperbolic
surface $S$ is the quotient of
the Poincar\'e upper-half plane $\H$ by a cocompact
torsion-free Fuchsian group $\Gamma$. Its unit tangent bundle
$T^1S$, that can be seen as the
quotient of $PSL(2,\R)$ by $\Gamma$, is the phase space of these flows. The geodesic flow is uniformly
hyperbolic on $T^1S$, and its stable manifolds are precisely the horocyclic orbits. Alternatively, these
flows can be seen as coming from  the diagonal and
upper-triangular unipotent one-parameter subgroups of $PSL(2,\R)$ acting
on $\Gamma\backslash PSL(2,\R)$.

In this context, Hedlund proved in 1936 that the horocycle flow is minimal -- that is, all its orbits are
dense \cite{Hedlund}.

\medskip

M. Ratner completed in the 1990s the ergodic-theoretical and topological descriptions of the dynamics of
unipotent groups on homogeneous spaces, with a conclusive theorem in a subject that had seen important
contributions by S.G. Dani, H. Furstenberg, G.A. Margulis, J. Smillie, among others.

Hedlund's theorem has a converse (see for example \cite{Dal'Bo2}): If the horocycle flow on a hyperbolic
surface $S$ is minimal, then $S$ must be compact.
In fact the dynamics of the horocycle flow is also
well understood for {\em geometrically finite} surfaces in general
(i.e. those whose fundamental group is finitely generated):
in the
restriction to the non wandering set of the flow, the orbits are dense or periodic (see for example
\cite{Dal'Bo}).

The topological behavior of horocycles poses
many questions in the case where the surface is geometrically infinite (i.e.
its fundamental group is not finitely generated). M. Kulikov has constructed in \cite{Kulikov} a
{\em geometrically infinite} surface without minimal sets for the horocycle flow and
S. Matsumoto has announced
other examples in \cite{Matsumoto} and revisited them in \cite{Matsumoto2}. The dynamics of the recurrent
horocycle orbits under different assumptions has been extensively studied by  Y. Coud\`ene, F. Ledrappier,
F. Maucourant and B. Schapira, among others.

\medskip

From the point of view of ergodic theory, in the infinite-volume setting, A.N. Starkov conjectured that
this flow is ergodic
(with respect to Liouville measure) if and only if the action of $\Gamma$ on the boundary $\partial\H$ is
ergodic (with respect to Lebesgue measure). By duality, this is equivalent to the ergodicity of the joint
action of the geodesic and horocycle flows on $T^1 S$, which is  just the action of the affine group $B$.
This result was proved in a special case by M. Babillot and F. Ledrappier  and independently by M. Pollicott,
and in general by V. Kaimanovich. For these and other related results see for example
\cite{Starkov} and \cite{Kaimanovich}. See also \cite{Roblin} for a complete survey in the more general
setting of pinched Hadamard manifolds.

\medskip

In the present paper we consider a compact lamination $(M,\cF)$ by hyperbolic surfaces. 
Its unit tangent
bundle is a lamination
obtained by taking the unit tangent bundles of the leaves of $\cF$, and it is defined by a 
continuous
$PSL(2,\R)$-action. We study the horocycle flow --that is, the action of the
upper triangular unipotent subgroup $U$ of $PSL(2,\R)$-- on this space when the lamination 
is minimal.
The actions of the diagonal group $D$ and the affine group $B$ play an important role in this 
study.
The idea of studying the geodesic flow for compact laminations is not new. Its dynamics 
and its relation
with the dynamics of the lamination $\cF$ have been studied in several contexts, see for 
example
\cite{Bonatti-Gomez-Mont-Vila}, \cite{Hurder}.

As the lamination is minimal,
one might expect that the compactness of the ambient space forces the minimality of the horocycle flow,
like for compact hyperbolic surfaces.
Nevertheless, this is not the case, since there are examples where not even the $B$-action is minimal
(see \cite{Fernando-Francoise} and \cite{Alberto-Matilde}). The last two
authors have posed the following question some years ago: {\em Is the minimality of the $B$-action 
equivalent to the
minimality of the horocycle flow?} 

\medskip

The main result of this paper gives a positive answer to the question above for a large family of
laminations by hyperbolic surfaces:

\begin{theorem}
\label{main_theorem}

Let $(M,\cF)$ be a minimal compact lamination by hyperbolic surfaces. If $\cF$ 
has no simply connected leaves,
then the horocycle flow on the unitary tangent bundle
$ T^1 \cF$ is minimal.
\end{theorem}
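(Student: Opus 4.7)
The plan is to take any non-empty closed $U$-invariant set $X \subset T^1\cF$ and prove $X = T^1\cF$; by Zorn's lemma one may assume $X$ is $U$-minimal. The argument will proceed in three stages: first, use the minimality of $\cF$ to show that the $B$-saturation of $X$ is essentially everything; second, exploit the non-simply-connected-leaves hypothesis to locate a periodic orbit of the geodesic flow lying in $X$, which will force $D$-invariance of $X$; and third, upgrade this to $\PSL$-invariance along each leaf, whence $X$ is leafwise saturated and minimality of $\cF$ finishes the proof.

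For the first stage, note that $U$ is normal in $B$, so $BX$ and its closure are both $U$- and $B$-invariant. Because $B$ acts transitively on $\H$ (Iwasawa decomposition $\PSL = KB$), each $B$-orbit in any leafwise $T^1 L$ projects onto $L$. Hence $\pi(BX)$ is a non-empty $\cF$-saturated subset of $M$, which is dense by minimality of $\cF$; as $\pi$ is proper, $\pi(\overline{BX}) = M$, so $\overline{BX}$ meets every fiber of $T^1\cF\to M$.

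The second stage, finding $v \in X$ with $g_T v = v$ for some $T>0$, is the expected main obstacle. Once such a $v$ is located, $X$ and $g_T X$ are both $U$-minimal (since $D$ normalizes $U$) and both contain $v$, so $g_T X = X$; iterating, and varying over closed geodesic vectors coming from different leaves---whose periods one expects to generate a dense subgroup of $\R$ by minimality of $\cF$ together with the existence of a closed geodesic in every leaf---would force $X$ to be invariant under a dense subgroup of $D$ and hence, by closedness, under all of $D$. To actually produce a periodic vector inside $X$ rather than merely near it, I would combine the non-simply-connected hypothesis (which supplies a closed geodesic in every leaf) with the projection result from stage one and the leafwise uniform hyperbolicity of the geodesic flow: a geodesic orbit of a point of $X$ must be recurrent in the compact space $T^1\cF$, and an Anosov-closing style shadowing argument, carried out carefully inside the $U$-minimal set $X$, should yield a genuine periodic orbit contained in $X$.

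Finally, with $X$ both $B$-invariant and $U$-minimal, one must upgrade to full leafwise $\PSL$-invariance using the opposite unipotent subgroup $\bar U$. Near a periodic $v \in X$ of period $T$, the map $g_{-t}$ contracts $\bar U$-directions as $t\to\infty$ while $B$-invariance controls the $DU$-directions; a limit-point argument applied to iterates $g_{-nT}$ of points of $X$ near $v$ should force $X$ to also be $\bar U$-invariant. Since $U$, $D$ and $\bar U$ generate $\PSL$, the set $X$ is $\PSL$-invariant along each leaf, so $X = \pi^{-1}(A)$ for some $\cF$-saturated closed $A\subseteq M$; by minimality of $\cF$, $A = M$ and $X = T^1\cF$, as required.
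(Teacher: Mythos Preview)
Your Stage~2 contains the essential gap, and it is not a technicality. Anosov closing produces a periodic orbit of the \emph{geodesic} flow near a $g_t$-recurrent point, but $X$ is only $U$-invariant; there is no mechanism forcing the shadowed periodic orbit to lie in $X$. The phrase ``carried out carefully inside the $U$-minimal set $X$'' hides exactly the difficulty that has to be solved. The paper never attempts to place a periodic $g_t$-orbit inside the minimal set $\mathcal{M}$; instead its Key Lemma shows, for any $u$ whose positive geodesic ray escapes to infinity in its (coarsely tame, bounded-geometry) leaf, that there exists $t_0>0$ with $g_{t_0}(u)\in\overline{h_{\R}(u)}$. The proof is a Busemann-function computation exploiting that such rays cross infinitely many closed geodesics of uniformly bounded length; this yields $g_{t_0}(\mathcal{M})=\mathcal{M}$ for a single $t_0$, which already suffices (Proposition~1) once one knows the $B$-action is minimal and the horocycle flow is transitive---both of which follow from the leaves having first-kind fundamental groups. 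No density of periods and no $\bar U$-argument are needed.

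Two further points. First, your Stage~1 only gives $\pi(\overline{BX})=M$, not minimality of the $B$-action; the latter genuinely requires that each leaf's fundamental group act minimally on $\partial\H$, and this fails in general for minimal laminations by hyperbolic surfaces (there are examples with non-minimal $B$-action). Second, ``no simply connected leaf'' gives you one closed geodesic per leaf, but the Key Lemma needs far more: every escaping geodesic ray must meet infinitely many short closed geodesics. Establishing this (all leaves coarsely tame) from the hypothesis is the content of the paper's Theorem~2 and requires the construction of a uniform systole cylinder cover from an essential loop without holonomy; this topological work is entirely absent from your outline.
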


Minimality of the foliated horocycle flow has already been proved in an algebraic setting. 
More precisely, in {\it Remarks on the dynamics of the horocycle flow for homogeneous 
foliations by hyperbolic surfaces} (\cite{Fernando-Francoise}), 
the authors 
prove the minimality 
of the horocyclic flow for the homogeneous case using algebraic methods. On the other hand, in the 
work {\it Horocycle flows for laminations by hyperbolic Riemann surfaces and Hedlund's theorem} 
(\cite{Alberto-Matilde}), the authors consider a general compact minimal 
lamination by hyperbolic surfaces and they mostly study, with topological tools, the action of the affine 
group generated by the joint action of the horocycle and geodesic flows. Its main contribution when it 
comes to the subject of the horocycle flow is to prove that under certain conditions it is 
topologically transitive, and to present examples where different types of dynamical 
behaviour appear, which in turn leads to the conjecture which is addressed in the present paper.

\bigbreak

A foliation by hyperbolic surfaces that satisfies the hypotheses of Theorem \ref{main_theorem} is the
{\em Hirsch foliation},
briefly described below.
A more detailed description of it can be found in \cite[Page 371]{Candel-Conlon}.
It was this foliation that motivated many of the ideas present in this paper. Furthermore,
we believe that in this example, because of the simplicity of its construction, many of the arguments we
use become particularly transparent.

\begin{example}[The Hirsch foliation]

Let $P$ be the closed unit disk minus two open disks of radius   $1/4$ centered at $-1/2$ and $1/2$ in the complex
plane; namely, a {\em pair of pants}.
The set $P$ is then invariant under the involution blue $\sigma : Z \in P \mapsto -Z \in P$. The suspension
$\mathbf{S}$ of $\sigma$ is a non-trivial fiber bundle over $\mathbb{S}^1$, whose holonomy interchanges the
two interior boundary components of the pair of pants. Therefore, it can be obtained from a solid torus
removing from its interior a thinner solid torus wrapped two times around the generator of its fundamental
group. Let $p:\mathbf{S}\to \mathbb{S}^1$ be the fibration. The boundary of $\mathbf{S}$ consists of two
tori $T_1$ and $T_2$.  The restriction of $p$ to the external torus $T_1$ is itself a fibration $p_1$ over $S^1$ with 
fiber $\mathbb{S}^1$, while the restriction of $p$ to the internal torus $T_2$ is obtained from the fibration $p_2$ over $S^1$
with fiber $\mathbb{S}^1$ by composing with the covering map sending 
$z  =e^{2 \pi i \theta} \in S^1$ to $z^2 = e^{2 \pi i 2\theta} \in \mathbb{S}^1$.
(see Figure~\ref{hirsch}).



Let $M$ be the 3-manifold obtained from $\mathbf{S}$ by glueing smoothly $T_1$ and $T_2$ via a
diffeomorphism $h$ that sends  $p_1^{-1}(z)$ to $p_2^{-1}(z)$ for every $z$ in $\mathbb{S}^1$. For example, 
$h(Z,z) = (Zz/4+1/2,z^2)$ where we identify each point $(Z,z) \in \partial P \times S^1$ with its image in $T_1$. The fibration in 
$\mathbf{S}$ projects onto a foliation $\mathcal{F}$ having two types of leaves: there are countably many leaves of genus one and a Cantor
set of ends,  which correspond to periodic points of the doubling map $f(\theta)=2\theta$ (mod $1$), and all other leaves have no genus and a Cantor set of ends (they are so-called {\it Cantor trees}).  All leaves are dense and those of
genus one are exactly the ones with non-trivial holonomy (see Figure~\ref{Hirschwithouthol}-\ref{Hirschwithhol}).


 The Hirsch foliation admits many structures of foliation {\em by hyperbolic surfaces}, as explained in Section
\ref{Sshypmet}. The moduli space of such structures has recently been described in \cite{Alvarez-Lessa}.

\end{example}

\begin{figure}
		\subfigure[The Hirsch foliation]{
		\begin{tikzpicture}
\clip(-4,-2.2) rectangle (4,2);
\node (A) at (0,0) {\includegraphics[width=7.2cm]{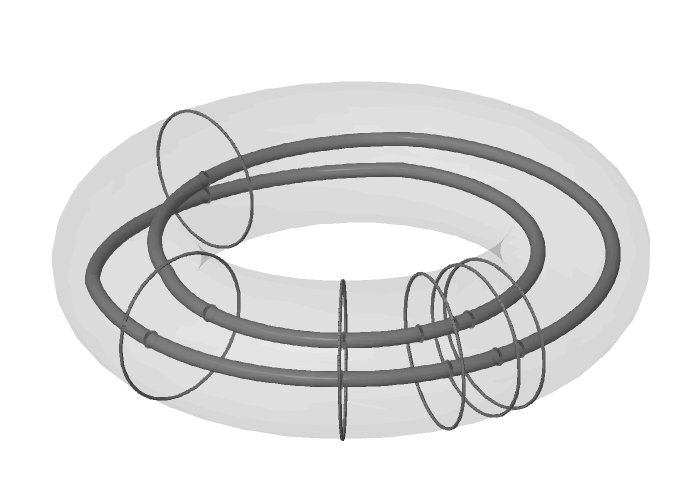}};
\end{tikzpicture}
\label{hirsch}
}
		\subfigure[Leaf without holonomy]{
 \begin{tikzpicture}
\node (B) at (0,0.){
 \rotatebox{-90}{
 \xy
 0;/r.18pc/:
(0,-3)*\ellipse(3,1){.};
(0,-3)*\ellipse(3,1)__,=:a(-180){-};
(-3,6)*\ellipse(3,1){.};
(-3,6)*\ellipse(3,1)__,=:a(-180){-};
(3,6)*\ellipse(3,1){-};
(-6,15)*\ellipse(3,1)__,=:a(-180){-};
(-6,15)*\ellipse(3,1){.};
(-3,12)*{}="1"; 
(3,12)*{}="2"; 
(-9,12)*{}="A2";
(9,12)*{}="B2"; 
"1";"2" **\crv{(-3,7) & (3,7)};
(-3,-6)*{}="A0";
(3,-6)*{}="B0"; 
(-3,1)*{}="A1";
(3,1)*{}="B1"; 
"A0";"A1" **\dir{-};
"B0";"B1" **\dir{-}; 
"B2";"B1" **\crv{(8,7) & (3,5)};
"A2";"A1" **\crv{(-8,7) & (-3,5)};
(0,15)*\ellipse(3,1){-};
(-9,30)*{}="11"; 
(-3,30)*{}="21"; 
(-15,30)*{}="A21";
(3,30)*{}="B21"; 
"11";"21" **\crv{(-9,25) & (-3,25)};
(-9,12)*{}="A01";
(-3,12)*{}="B01"; 
(-9,22)*{}="A11";
(-3,22)*{}="B11"; 
"A01";"A11" **\dir{-};
"B01";"B11" **\dir{-}; 
"B21";"B11" **\crv{(2,25) & (-3,23)};
"A21";"A11" **\crv{(-14,25) & (-9,23)};
(-9,24)*\ellipse(3,1){-};
(-3,24)*\ellipse(3,1){-};
(-15,48)*{}="12"; 
(-9,48)*{}="22"; 
(-21,48)*{}="A22";
(-3,48)*{}="B22"; 
"12";"22" **\crv{(-15,43) & (-9,43)};
(-15,30)*{}="A02";
(-9,30)*{}="B02"; 
(-15,40)*{}="A12";
(-9,40)*{}="B12"; 
"A02";"A12" **\dir{-};
"B02";"B12" **\dir{-}; 
"B22";"B12" **\crv{(-4,43) & (-9,41)};
"A22";"A12" **\crv{(-20,43) & (-15,41)};
\endxy}};
\end{tikzpicture}
\label{Hirschwithouthol}
}
\hspace{1cm}
\subfigure[Leaf with holonomy]{
\begin{tikzpicture}
\node (A) at (0,0 ){
\rotatebox{-90}{
 \xy
 0;/r.15pc/:
(0,-3)*\ellipse(3,1){.};
(0,-3)*\ellipse(3,1)__,=:a(-180){.};
(-3,6)*\ellipse(3,1){.};
(-3,6)*\ellipse(3,1)__,=:a(-180){-};
(3,6)*\ellipse(3,1){.};
(3,6)*\ellipse(3,1)__,=:a(-180){-};
(-3,12)*{}="1"; 
(3,12)*{}="2"; 
(-9,12)*{}="A2";
(9,12)*{}="B2"; 
"1";"2" **\crv{(-3,7) & (3,7)};
(-3,-6)*{}="A0";
(3,-6)*{}="B0"; 
(-3,1)*{}="A1";
(3,1)*{}="B1"; 
"A0";"A1" **\dir{-};
"B0";"B1" **\dir{-}; 
"B2";"B1" **\crv{(8,7) & (3,5)};
"A2";"A1" **\crv{(-8,7) & (-3,5)};
(3,6)*\ellipse(3,1){.};
(9,6)*\ellipse(3,1){.};
(9,12)*{}="X1"; 
(15,12)*{}="X2";
"X1";"X2" **\crv{(9,16) & (15,16)};
(3,12)*{}="Y1";
(21,12)*{}="Y2";
"Y1";"Y2" **\crv{(3,24) & (21,24)};
(0,-3)*\ellipse(3,1){.};
(6,-3)*\ellipse(3,1){.};
(3,-6)*{}="Z1";
(9,-6)*{}="Z2";
"Z1";"Z2" **\crv{(3,-11) & (9,-11)};
(-3,-6)*{}="W1";
(15,-6)*{}="W2";
"W1";"W2" **\crv{(-3,-18) & (15,-18)};
(6,-3)*\ellipse(3,1){.};
"Z2";"X2" **\dir{-};
"W2";"Y2" **\dir{-};
%
(-6,15)*\ellipse(3,1){.};
(-6,15)*\ellipse(3,1)__,=:a(-180){-};
(0,15)*\ellipse(3,1){-};
(-9,30)*{}="11"; 
(-3,30)*{}="21"; 
(-15,30)*{}="A21";
(3,30)*{}="B21"; 
"11";"21" **\crv{(-9,25) & (-3,25)};
(-9,12)*{}="A01";
(-3,12)*{}="B01"; 
(-9,22)*{}="A11";
(-3,22)*{}="B11"; 
"A01";"A11" **\dir{-};
"B01";"B11" **\dir{-}; 
"B21";"B11" **\crv{(2,25) & (-3,23)};
"A21";"A11" **\crv{(-14,25) & (-9,23)};
(-9,24)*\ellipse(3,1){-};
(-3,24)*\ellipse(3,1){-};
(-15,48)*{}="12"; 
(-9,48)*{}="22"; 
(-21,48)*{}="A22";
(-3,48)*{}="B22"; 
"12";"22" **\crv{(-15,43) & (-9,43)};
(-15,30)*{}="A02";
(-9,30)*{}="B02"; 
(-15,40)*{}="A12";
(-9,40)*{}="B12"; 
"A02";"A12" **\dir{-};
"B02";"B12" **\dir{-}; 
"B22";"B12" **\crv{(-4,43) & (-9,41)};
"A22";"A12" **\crv{(-20,43) & (-15,41)};
\endxy}};

 \end{tikzpicture}
 \label{Hirschwithhol}
 }
 \caption{The Hirsch foliation and its leaves}
 \end{figure}

The non-homogeneous Lie foliations constructed by G. Hector, Matsumoto and G. Meigniez in
\cite{Hector-Matsumoto-Meigniez} are other examples of minimal foliations satisfying the hypotheses
of Theorem \ref{main_theorem} for which the horocycle flow is always minimal.

\medskip

This paper has two main parts. The first one, contained in Sections \ref{Sgood} and \ref{Skey}, 
prove a statement that
is apparently weaker than Theorem \ref{main_theorem}  (see  Theorem
\ref{main_theorem_toy_version} in Section \ref{Sgood}). Basically,   this result states that 
for laminations which are similar to the Hirsch foliation
the horocycle flow is minimal. The second one,
contained in Section \ref{Sfol}, which is purely topological and is unrelated to
the horocycle flow, says that, roughly speaking, 
 laminations without simply connected 
leaves are similar to the Hirsch foliation.
More precisely, its main result is the following:

\begin{theorem}\label{reduction_theorem}
Let $(M,\cF)$ be a minimal compact lamination by hyperbolic surfaces.
The following conditions are equivalent:

\begin{list}{\labelitemi}{\leftmargin=5pt}
 \item[(i)\;\;] No leaf is simply connected.
 \item[(ii)\;] There is a leaf that has an essential loop with trivial holonomy.
 \item[(iii)] There is a leaf which is geometrically infinite.
 \item[(iv)\,] All leaves are coarsely tame.
\end{list}
\end{theorem}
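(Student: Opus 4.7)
My plan is to prove the cycle $(i) \Rightarrow (ii) \Rightarrow (iii) \Rightarrow (iv) \Rightarrow (i)$, with minimality of $\cF$ entering at each step. The quickest leg is $(i) \Rightarrow (ii)$: by the Epstein--Millett--Tischler theorem (or its lamination version due to Hector), the leaves with trivial holonomy form a residual subset of $M$. Under $(i)$, any such leaf $L$ satisfies $\pi_1(L) \neq 1$, and every essential loop in $L$ automatically has trivial holonomy. The leg $(iv) \Rightarrow (i)$ I expect to follow almost from the definition of coarse tameness, which should be incompatible with a simply-connected leaf in a minimal setting (a copy of $\H$ does not admit the type of non-trivial compact exhaustion coarse tameness provides).

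The central implication is $(ii) \Rightarrow (iii)$. Let $\gamma \subset L_0$ be an essential loop with trivial holonomy, and pick a small transversal $T$ through a point $p_0 \in \gamma$. Triviality of holonomy yields a product tubular neighborhood $U \cong \gamma \times T$ of $\gamma$ in $M$, so every $p \in L_0 \cap T$ gives a parallel loop $\gamma_p \subset L_0$. By minimality, $L_0 \cap T$ is dense in $T$, producing an infinite family of parallels. The strategy is to concatenate these parallels with connecting arcs in $L_0$ and argue that the resulting free homotopy classes cannot all lie in a finitely generated subgroup of $\pi_1(L_0)$, forcing geometric infiniteness. The main obstacle --- and I expect it to be the principal technical difficulty --- is establishing genuine \emph{independence} of the produced classes rather than mere multiplicity. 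I would try to achieve this by combining recurrence dynamics on $T$ with a topological pigeonhole argument: if all constructed loops lay in a finitely generated subgroup, then two distinct parallels $\gamma_p$ and $\gamma_{p'}$ would cobound an embedded annulus $A \subset L_0$; pushing $A$ through the product structure of $U$ would force $\gamma$ itself to become null-homotopic in $L_0$, contradicting essentialness.

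Finally, for $(iii) \Rightarrow (iv)$, I would transfer the infinite topological structure of the geometrically infinite leaf to every other leaf using minimality. Using $(i) \Leftrightarrow (ii)$ already established, the geometrically infinite leaf contains an infinite family of independent essential loops, each of which may be arranged to have trivial holonomy by choosing representatives in a residual leaf. These loops propagate through their product neighborhoods to any leaf entering $U$, which by minimality is every leaf. Organizing the propagated loops into a consistent exhaustion by compact subsurfaces with controlled complementary pieces should yield the coarse tameness of each leaf, closing the cycle. I expect the genuine geometric content to live in $(ii) \Rightarrow (iii)$, while the other steps are structural consequences of minimality together with standard foliation-theoretic technology.
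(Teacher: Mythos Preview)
Your cycle $(i) \Rightarrow (ii) \Rightarrow (iii) \Rightarrow (iv) \Rightarrow (i)$ is the reverse of the paper's, which proves $(i) \Leftrightarrow (ii)$ and then $(ii) \Rightarrow (iv) \Rightarrow (iii) \Rightarrow (ii)$. This is not cosmetic: the reversal relocates the hard implication to a place where your sketch does not handle it.

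Your $(i) \Rightarrow (ii)$ and $(iv) \Rightarrow (i)$ are fine and essentially agree with the paper. The problems are the middle two legs. For $(ii) \Rightarrow (iii)$, the annulus argument does not yield a contradiction: if two parallels $\gamma_p,\gamma_{p'}$ cobound an annulus $A \subset L_0$, there is no mechanism by which ``pushing $A$ through $U$'' makes $\gamma$ null-homotopic --- $A$ lives entirely in the leaf $L_0$, and this is not a codimension-one foliation of a $3$-manifold, so Novikov-type slab arguments are unavailable. More fundamentally, infinitely many essential loops in a free group of finite rank is no contradiction at all; they could all be conjugate to powers of a single element. The paper sidesteps this direction and proves $(iii) \Rightarrow (ii)$ instead, by the short observation that compactness of $M$ forces the holonomy group $\mathrm{Hol}(L,x)$ to be finitely generated, so for a geometrically infinite leaf the surjection $\pi_1(L,x) \twoheadrightarrow \mathrm{Hol}(L,x)$ has nontrivial kernel.

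For $(iii) \Rightarrow (iv)$ you write ``propagate loops and organize into an exhaustion'', but this is exactly where the real content lies and your outline supplies none of it. Coarse tameness is a \emph{uniform quantitative} statement: a single constant $b>0$ must work for every diverging geodesic ray in every leaf. The paper achieves this (as $(ii) \Rightarrow (iv)$) through a substantial construction, the \emph{uniform systole cylinder cover}: from one essential loop with trivial holonomy, Reeb stability plus continuous leafwise uniformization produce a family of hyperbolic cylinders $D_t$, each a tubular neighbourhood of a closed geodesic of length in $[a,b]$; compactness of $M$ then guarantees that cylinders of uniformly bounded width already cover $M$. An algebraic intersection-number argument shows that any geodesic ray going to infinity in a leaf must exit infinitely many such cylinders through boundary curves of length $\le b$, hence cross infinitely many closed geodesics of bounded length. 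This covering-by-cylinders construction and the intersection argument are the heart of the theorem; nothing in your proposal substitutes for them.
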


The concept of {\em coarsely tame} hyperbolic surfaces will be defined in Section \ref{Spre}.
Coarsely tame surfaces are a special kind of geometrically infinite hyperbolic surfaces.

Together, Theorems \ref{reduction_theorem} and \ref{main_theorem_toy_version} prove
Theorem \ref{main_theorem}. Furthermore, Theorem \ref{reduction_theorem} tells us precisely
for which  laminations the topological dynamics of the horocycle flow is still unclear. In fact,
rephrasing Theorem \ref{reduction_theorem} we get:

\begin{corollary}\label{reduction_theorem_version_2}
Let $(M,\cF)$ be a minimal compact lamination by hyperbolic surfaces.
The following conditions are equivalent:

\begin{list}{\labelitemi}{\leftmargin=5pt}
 \item[(i)\;\;] There is a simply connected leaf.
 \item[(ii)\;] There is a leaf that is geometrically finite.
 \item[(iii)] All leaves are geometrically finite.
 \item[(iv)\,] All essential loops have non-trivial holonomy.
\end{list}
\end{corollary}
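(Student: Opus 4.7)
The plan is to deduce Corollary~\ref{reduction_theorem_version_2} from Theorem~\ref{reduction_theorem} almost formally, by negating three of its items. I would begin by observing that item (i) of the corollary is literally the negation of item (i) of the theorem; item (iii) of the corollary is the negation of item (iii) of the theorem; and item (iv) of the corollary is the negation of item (ii) of the theorem, since ``all essential loops have non-trivial holonomy'' is exactly the negation of ``there is a leaf that has an essential loop with trivial holonomy''. Taking the contrapositive of the equivalences (i)$\Leftrightarrow$(ii)$\Leftrightarrow$(iii) in Theorem~\ref{reduction_theorem} therefore yields at once the equivalence of items (i), (iii) and (iv) of the corollary.

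It remains to incorporate item (ii), ``there is a leaf that is geometrically finite''. One direction, (iii)$\Rightarrow$(ii), is trivial, since once every leaf is known to be geometrically finite any one of them witnesses (ii). For the converse I would argue by contrapositive. Assume that (iii) of the corollary fails, so some leaf is geometrically infinite. This is precisely item (iii) of Theorem~\ref{reduction_theorem}, which by that theorem is equivalent to its item (iv), namely that \emph{all} leaves are coarsely tame. The decisive ingredient is the remark following Theorem~\ref{reduction_theorem} to the effect that coarsely tame surfaces are a special kind of geometrically infinite hyperbolic surfaces; hence every leaf is geometrically infinite, which contradicts (ii). This closes the chain of equivalences.

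There is essentially no obstacle here beyond noticing that item (ii) of the corollary is not the negation of anything appearing in Theorem~\ref{reduction_theorem}, so the equivalence requires one extra piece of information. That information is precisely the inclusion ``coarsely tame $\subset$ geometrically infinite'', which upgrades the theorem's conclusion ``there exists a geometrically infinite leaf'' to the much stronger ``every leaf is geometrically infinite''. Once this is registered, the whole corollary follows by purely logical rearrangement of Theorem~\ref{reduction_theorem}, with no new dynamical or geometric input.
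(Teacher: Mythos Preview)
Your proposal is correct and matches the paper's approach: the paper presents Corollary~\ref{reduction_theorem_version_2} as a mere ``rephrasing'' of Theorem~\ref{reduction_theorem} with no further argument, and you have spelled out exactly that rephrasing, including the one non-formal step (using that coarsely tame surfaces are geometrically infinite, stated in Remark~\ref{remark:geodesic_rays}(i)) needed to handle item~(ii).
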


For some of these laminations, though, the horocycle flow is known not to be minimal, but neither is
the corresponding $B$-action. Thus, in the geometrically finite case, the question whether the minimality of the $B$-action implies the minimality of the horocycle flow remains open.

\bigbreak

 Theorem \ref{main_theorem} remains valid for any compact lamination by surfaces of variable
negative curvature. Such a example can be obtained, for example, by multiplying the uniformized hyperbolic 
metric by any positive function which is close to $1$ in the $C^2$ topology.
The geometric arguments used in Section~\ref{Skey} can be adapted to any surface of pinched negative curvature whereas 
the uniformization theorem of \cite{Candel} and \cite{Verjovsky} can be used to prove that if no leaf is simply connected, then all the leaves 
satisfy the hypothesis of the Key Lemma.

\subsection*{Acknowledgements}
We thank Shigenori Matsumoto for kindly sending us an unpublished note on the subject of horocycle flows
without minimal sets. We are also grateful to the referees of Discrete and Continuous Dynamical Systems for a very careful reading of the manuscript 
and many valuable suggestions.
This work was partially supported by Spanish Excellence Grants MTM2010-15471 and  MTM2013-46337-C2-2-P,  Galician Grant GPC2015/006 and European Regional Development Fund,
Grupo CSIC 618 (UdelaR, Uruguay) and project PAPIIT-DGAPA IN103914 (UNAM, Mexico).

\section{Preliminaries and notation} \label{Spre}

\subsection*{Hyperbolic surfaces}

A hyperbolic surface $S$ is the quotient of the hyperbolic
plane $\H$ under the left action of a torsion-free discrete subgroup
$\Gamma$ of the group $PSL(2,\R)$ of orientation preserving isometries of $\H$. This group
acts freely and transitively on the unit tangent
bundle $T^1\H$ of $\H$, which means that we can make the identification $T^1S= \Gamma\backslash PSL(2,\R)$.

When $\Gamma$ is of finite type we say that $S$ is {\em geometrically finite}, else it is {\em geometrically infinite}.

\begin{definition}
We say that $S$ is {\em coarsely tame} if it is noncompact and there exists a
constant $b>0$ such that
any geodesic ray in $S$ either stays in a compact region or intersects infinitely many closed geodesics whose lengths
are bounded from above by $b$.
\end{definition}

\begin{remark}\label{remark:geodesic_rays}

\noindent
(i) A coarsely tame surface is geometrically infinite and its fundamental group is a purely
hyperbolic Fuchsian group of the first kind-- that is, its limit set is the whole boundary at
infinity of the hyperbolic plane. In  particular, it has infinite area.
\medskip

\noindent
(ii) If furthermore it has bounded geometry  as defined in \cite[Section 3]{CGT}, 
there exists a lower bound $a>0$ of the injectivity
radius
of $S$, hence
any geodesic ray in $S$ either stays in a compact region or intersects infinitely many closed
geodesics whose lengths
are bounded
between $a$ and $b$.
\end{remark}

For instance, the leaves of the Hirsch foliation are coarsely tame of bounded geometry. Also,
if $\Gamma$ is a cocompact
Fuchsian group and $N$ is a proper normal subgroup of infinite index, then $ N\backslash \H$ is
coarsely tame
of bounded geometry.
(See \cite{Sarig}.)

Let $D$ and $U$ be the diagonal and unipotent subgroups of $PSL(2,\R)$
$$D=\left\{
\left (
\begin{array}{cc}
e^\frac{t}{2} & 0\\
0 & e^{-\frac{t}{2}}
\end{array}
\right ) \,:\,t\in\R
\right \}
\quad \mbox{and} \quad
U=\left\{
\left (
\begin{array}{cc}
1 & s\\
0 & 1
\end{array}
\right ) \,:\,s\in\R
\right \}.
$$
Their right actions define the geodesic flow $g_t$ and the horocycle flow $h_s$ on $T^1S$, respectively. Therefore, the joint action of $g_t$ and
$h_s$ is the action of the affine  group
$$B=\left\{
\left (
\begin{array}{cc}
a & b\\
0 & a^{-1}
\end{array}
\right ) \,: \,a>0,\ b\in\R
\right\}.$$

If $I\subset\R$, the subset $\{g_t(u)\, :\, t\in I\}$ of the orbit of a point $u\in T^1S$ under the geodesic flow is $g_{\!_{I}}\!(u)$, and a similar notation will be used for subsets of horocycle orbits.

The projection $\pi:T^1S\to S$ is the canonical projection that assigns to each vector in $T^1S$ its base point.

\subsection*{Laminations}
A compact {\em lamination} by surfaces $(M,\cF)$ (or simply $\cF$) consists of a compact metrizable space $M$ together
with a family $\{(U_\alpha, \varphi_\alpha)\}$
such that
\begin{enumerate}
\item $\{U_\alpha\}$ is an open covering of $M$,
\item $\varphi_\alpha: U_\alpha \to D\times T$ is a homeomorphism, where $D$ is a disk in $\R^2$ and $T$ is a
 separable locally compact metrizable space, and
\item for $(x,t)\in \varphi_\beta(U_\alpha\cap U_\beta)$,
$\varphi_\alpha \circ \varphi_\beta^{-1}(x,t) = (\lambda^t_{\alpha\beta}(x), \tau_{\alpha\beta}(t))$, where
$\lambda^t_{\alpha\beta}$
is smooth and depends continuously on $t$ in the $C^\infty$ topology.
\end{enumerate}

We will always work in the  leafwise  smooth setting unless otherwise stated, although
$C^3$ leafwise regularity would be enough for all our purposes.

\medskip

Each $U_\alpha$ is called a {\em foliated chart}, a set of the form $\varphi_\alpha^{-1}(\{x\}\times T)$ being its
{\em transversal}.
The sets of the form $\varphi_\alpha^{-1}(D\times\{t\})$, called {\em plaques}, glue together to form maximal
connected surfaces called {\em leaves}.

\medskip
A lamination is said to be {\em minimal} if all its leaves are dense.

\medskip
The tangent bundle of the lamination $\cF$ is the $\R^2$-bundle over $M$ which can be trivialized on each foliated chart
$$
\begin{array}{rcl}
(U_\alpha\cap U_\beta) \times \R^2 &\to& (U_\alpha\cap U_\beta)\times \R^2\\
(p,v)  &\mapsto& (p, d\lambda^t_{\alpha\beta}(p)(v)).
\end{array}
$$

It is itself a (noncompact) lamination, whose leaves are the tangent bundles of the leaves of $\cF$.

\subsection*{Laminations by hyperbolic surfaces} \label{Sshypmet}
In each foliated chart we can endow each plaque with a Riemannian metric, in a continuous way. Glueing these local
metrics with
partitions of unity gives a Riemannian metric on each leaf, which varies continuously in the $C^\infty$ topology as
we move from leaf to leaf. In particular, leaves of a compact lamination always have bounded geometry.

When $(M,\cF)$ is endowed with such a metric, we define the {\em unit tangent bundle} $T^1\cF$ of
$\cF$ as the subset of the tangent bundle containing vectors of unit length. It is a circle bundle over $M$, and it is a
lamination whose leaves are the unit tangent
bundles of the leaves of $\cF$.

Furthermore, we will assume that there is a  Riemannian metric for which all leaves are {\em hyperbolic surfaces}, that is, 
they have constant curvature $-1$. In fact, any given Riemannian metric endows each leaf with a conformal structure, or equivalently, 
with a Riemann surface structure. If all leaves  are uniformized by the disk,
then the uniformization is continuous and leaves become hyperbolic surfaces.
See \cite{Candel} and \cite{Verjovsky}. The existence of these hyperbolic metrics turns out to be a purely
topological condition. It  is equivalent to every leaf having positive volume
entropy. This is independent of the metric because, $M$ being compact, the restriction to leaves of all
Riemannian metrics are quasi-isometric.

\medskip

Once each leaf has a hyperbolic structure, there is a right continuous $PSL(2,\R)$-action on $T^1\cF$ whose orbits are the unit
tangent bundles to the leaves. We will also denote by $g_t$ and $h_s$ the geodesic and horocycle flows on $T^1\cF$ defined by the action of the one-parameter subgroups $D$ and $U$. The natural right $B$-action on $T^1\cF$ combines these two flows.

\section{The horocycle flow on a minimal foliation by coarsely tame leaves} \label{Sgood}

In next sections, we will prove the following (apparently weaker) version of the main theorem which was stated in the
Introduction:

\begin{theorem}
\label{main_theorem_toy_version}

Let $(M,\cF)$ be a minimal compact lamination by hyperbolic coarsely tame surfaces.
Then the horocycle flow on the unitary tangent bundle
$T^1 \cF$ is minimal.
\end{theorem}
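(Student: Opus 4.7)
The plan is to prove the minimality of the horocycle flow by showing directly that, for every $x \in T^1\cF$ and every non-empty open set $V \subseteq T^1\cF$, some iterate $h_s(x)$ lies in $V$. First, using the minimality of $\cF$, the leaf $L$ through $\pi(x)$ is dense in $M$, hence $T^1 L$ is dense in $T^1\cF$; so I may choose a point $z \in T^1 L$ lying close to $V$ in the ambient metric of $T^1\cF$. The task reduces to showing that the $U$-orbit of $x$, regarded as a curve in the ambient compact space $T^1\cF$ rather than in $T^1 L$, accumulates on $z$. This distinction is essential: because $L$ is coarsely tame and hence geometrically infinite, the point $z$ may be very far from $x$ in the intrinsic hyperbolic metric of $L$, while still being close in $T^1\cF$.

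Next, I would exploit the standard commutation relation $g_{-t}\, h_s\, g_{t} = h_{e^{t}s}$ between the geodesic and horocycle flows to trade long horocyclic pieces at $x$ for short ones at a point of its backward geodesic orbit. Concretely, if for some large $t>0$ the point $g_{-t}(x)$ happens to be ambient-close to $g_{-t}(z)$ in $T^1\cF$, then a suitable parameter $s \sim e^{t} s_{0}$ yields $h_{s}(x)$ ambient-close to $z$, and we are done. The problem is thereby reduced to a statement of \emph{geodesic recurrence in the ambient space} $T^1\cF$: given $x$ and a prescribed target, find $t>0$ such that $g_{-t}(x)$ lies in any prescribed open neighbourhood of $g_{-t}(z)$.

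This is where coarse tameness enters, via the Key Lemma of Section \ref{Skey}. Its role, as I would set it up, is the following. By coarse tameness, the backward geodesic ray of $x$ either stays in a compact region of $L$, in which case a Hedlund-type argument in the compact region already provides the necessary recurrence, or else it crosses infinitely many closed geodesics of $L$ whose lengths are uniformly bounded. Each such closed geodesic is a periodic orbit of the geodesic flow in $T^1 L$ and carries a hyperbolic shadowing structure. Coupling this intrinsic hyperbolic recurrence with the transverse flexibility supplied by the minimality of $\cF$, which allows one to pass from a point of $L$ to a nearby leaf, one should be able to build geodesic segments issuing from $x$ whose endpoints are $T^1\cF$-close to any prescribed target.

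The main obstacle will be the Key Lemma itself. The classical Hedlund argument is essentially one-dimensional in nature, taking place on a single compact surface; here one must combine the \emph{intrinsic} horocycle/geodesic dynamics on a fixed coarsely tame leaf (which yields rich recurrence within $T^1 L$) with the \emph{transverse} dynamics of the lamination (which carries one across leaves). Marrying these two ingredients into a single recurrence statement, uniformly in the target $z$, is the heart of the matter. Once the Key Lemma is available, the compactness of $T^1\cF$ together with the expansion provided by the commutation relation produces the required parameter $s$, establishing the minimality of the horocycle flow on $T^1\cF$.
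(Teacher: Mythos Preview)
There is a genuine gap in your Step~2. The claim ``if $g_{-t}(x)$ is ambient-close to $g_{-t}(z)$, then some $h_s(x)$ is ambient-close to $z$'' does not follow from the commutation relation. If $g_{-t}(z)=g_{-t}(x)\cdot p$ with $p\in PSL(2,\R)$ close to the identity, then $z=x\cdot(g_{-t}\,p\,g_t)$, and conjugation by $g_t$ expands the lower-unipotent component of $p$ by the factor $e^{t}$. Hence $g_{-t}\,p\,g_t$ need not lie anywhere near $U$, and no choice of $s$ makes $h_s(x)$ close to $z$. The commutation relation lets you rescale a horocycle arc along a \emph{single} geodesic trajectory; it does not convert proximity of two independent backward geodesic orbits into horocyclic proximity. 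Nor is the reformulated target ``$g_{-t}(x)$ in any neighbourhood of $g_{-t}(z)$ for some $t$'' attainable for an arbitrary pair $x,z$ on the same leaf.

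The paper proceeds through minimal sets rather than by chasing an arbitrary target $z$. One fixes a $U$-minimal set $\mathcal{M}\subset T^1\cF$ and invokes Proposition~\ref{proposition:geo_flow_not_a_suspension}: if $g_{t_0}(\mathcal{M})=\mathcal{M}$ for some $t_0>0$, then minimality of the $B$-action together with transitivity of the horocycle flow (both consequences of the leaves being of the first kind, and neither of which you use) force $\mathcal{M}=T^1\cF$. The Key Lemma supplies this $t_0$ leafwise: for $u$ on a coarsely tame leaf of bounded geometry, either $\overline{h_\R(u)}=T^1L$, or the \emph{forward} geodesic ray from $u$ crosses infinitely many closed geodesics of length in $[a,b]$, and a Busemann-function estimate along their axes shows that suitable powers of the associated hyperbolic isometries carry the horocycle through $\tilde u$ to one at a definite positive height, giving $g_{t_0}(u)\in\overline{h_\R(u)}$. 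This is recurrence of the geodesic flow on the horocycle orbit \emph{closure of a single point}, not proximity of two geodesic orbits; that is why the minimal-set framework is indispensable here.
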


The best known example of an object in the hypotheses of this theorem
is probably the Hirsch foliation (described in the introduction). Studying the dynamics of the horocycle
flow in its unit tangent bundle will involve the
understanding of the dynamics of the horocycle flow on a single leaf, and also considerations about the
interplay between the dynamics of the  lamination and that of the horocycle flow on individual leaves.
\medskip

\begin{proposition}
\label{proposition:geo_flow_not_a_suspension}
Let $(M,\cF)$ be a minimal compact lamination such that
\begin{list}{\labelitemi}{\leftmargin=5pt}

\item[(i)\;] the affine action $T^1 \cF \curvearrowleft B$ is minimal,

\item[(ii)] the horocycle flow on $T^1 \cF$  is transitive, i.e. $\exists \, u \in T^1\cF$ such that
$\overline{h_{\R}(u)}= T^1\cF$.

\end{list}

\noindent
Let $\mathcal{M} \neq \emptyset$ be a minimal set for the horocycle flow in $T^1 \cF $, 
 i.e. a non-empty closed $U$-invariant 
subset of  $T^1 \cF $ which is minimal under inclusion.
If there is a real number $t_0 > 0$ such that $g_{t_0}(\mathcal{M}) = \mathcal{M}$, then
$\mathcal{M} = T^1\cF $.
\end{proposition}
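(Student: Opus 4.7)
The plan is to show that the $B$-orbit $B \cdot \cM = \bigcup_{t\in\R} g_t \cM$ is a closed, $B$-invariant subset of $T^1\cF$, so by the minimality hypothesis (i) it must be all of $T^1\cF$; then a transitive horocycle orbit, which by (ii) exists, can be used together with the $U$-invariance of each $g_t \cM$ to conclude that $\cM$ itself is the whole space.

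First I would record the commutation relation coming from the normalization of $U$ by $D$: a direct matrix computation gives $g_t h_s g_{-t} = h_{s e^t}$, so $g_{-t} U g_t = U$ for every $t\in\R$. As an immediate consequence, each translate $g_t \cM$ is $U$-invariant, since
\[
U(g_t \cM) = g_t (g_{-t} U g_t)\,\cM = g_t \, U\cM = g_t\cM.
\]
Now the hypothesis $g_{t_0}\cM = \cM$ shows that the assignment $t\mapsto g_t \cM$ descends to $\R/t_0\Z$, so the union $\cM'' := \bigcup_{t\in\R} g_t\cM$ coincides with $g_{[0,t_0]}(\cM)$, a continuous image of a compact set, hence compact. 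By construction $\cM''$ is $D$-invariant, and by the previous observation it is $U$-invariant; since $B$ is generated by $D$ and $U$, the set $\cM''$ is $B$-invariant. Hypothesis (i) then forces $\cM'' = T^1\cF$.

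With this in hand, let $u\in T^1\cF$ be the point given by (ii), with $\overline{h_{\R}(u)} = T^1\cF$. Because $\cM''=T^1\cF$, there exist $t_1\in [0,t_0]$ and $m\in\cM$ such that $u = g_{t_1}(m)$. For every $s\in\R$ the commutation relation gives
\[
h_s(u) \;=\; h_s g_{t_1}(m) \;=\; g_{t_1}\bigl(h_{s e^{-t_1}}(m)\bigr) \;\in\; g_{t_1}\cM,
\]
since $\cM$ is $U$-invariant. Thus $h_{\R}(u)\subseteq g_{t_1}\cM$, and because $g_{t_1}\cM$ is closed we conclude $T^1\cF = \overline{h_{\R}(u)} \subseteq g_{t_1}\cM$. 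Hence $g_{t_1}\cM = T^1\cF$, so $\cM = g_{-t_1}(T^1\cF) = T^1\cF$.

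The only genuinely delicate point is checking that $\cM''$ really is closed and $B$-invariant — both of which rely crucially on the periodicity $g_{t_0}\cM=\cM$ (which allows us to realize the $B$-orbit as a compact union) and on the normalization $g_{-t} U g_t = U$. Once those two observations are in place, minimality of the $B$-action and topological transitivity of $h_s$ combine in the final two paragraphs above with essentially no further work.
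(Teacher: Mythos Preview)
Your argument is correct and follows essentially the same route as the paper's proof: both use the periodicity $g_{t_0}\cM=\cM$ to realise the $B$-saturation $g_{\R}(\cM)$ as the compact set $g_{[0,t_0]}(\cM)$, invoke $B$-minimality to obtain $g_{\R}(\cM)=T^1\cF$, and then feed a transitive horocycle orbit into some $g_{t_1}\cM$ via the normalisation of $U$ by $D$ to conclude $\cM=T^1\cF$. Your write-up is somewhat more explicit about the commutation relation and avoids the (in fact unnecessary) contradiction framing the paper uses, but there is no substantive difference in strategy. (One cosmetic point: with the paper's right-action convention the conjugation reads $g_t h_s g_{-t}=h_{se^{-t}}$ rather than $h_{se^{t}}$; this sign does not affect any step of your argument.)
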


\begin{proof}
If $\mathcal{M} \neq T^1 \cF$, then
$$C = \{ \,  t \in \R \, :  \, g_t(\mathcal{M}) \cap \mathcal{M}\neq\emptyset \,  \}
=\{ \,  t \in \R \, : \, g_t(\mathcal{M}) = \mathcal{M} \,  \}$$
is a discrete subgroup of $\R$, so it is either trivial or cyclic.

By hypothesis, $C$ is nontrivial and hence the $B$-invariant set
\[ g_{\!_\R}\!(\mathcal{M})  = \bigcup_{t \in \R} g_t(\mathcal{M}) = \bigcup_{t \in [0,t_0]} g_t(\mathcal{M}) \]
is closed. Then
\[ g_{\!_\R}\!(\mathcal{M}) = T^1\cF \]
by $B$-minimality.

Let $u\in T^1\cF$ be a point with a dense horocycle orbit. There is $t \in [0,t_0]$ such that $g_t(u) \in \mathcal{M}$.
Since $g_t (h_{\R}(u)) = h_{\R}(g_t(u))$ and $\mathcal{M}$ is $h_{\!_\R}$-invariant,
$g_t(\overline{h_{\R}(u})) \subset \mathcal{M}$ and hence $T^1\cF = \mathcal{M}$.
\end{proof}

Therefore, to prove Theorem \ref{main_theorem_toy_version}, we have to see that it verifies the hypotheses of
Proposition~\ref{proposition:geo_flow_not_a_suspension}.

The first condition, the minimality of the affine action, follows from the minimality of $\cF$ and the fact that
the $B$-action restricted to the unitary tangent bundle of each
(coarsely tame) leaf $L=\Gamma\backslash \H$ is
dual to the action of its fundamental group on $PSL(2,\R)/B=\partial\H$, and hence both actions are minimal.

As for the second condition, the transitivity of the horocycle flow, since $\cF$ is minimal,
it is enough to prove that the horocycle flow is transitive
when restricted to some leaf $L$. This is true of all surfaces having a fundamental group of the first kind,
see for example \cite{Dal'Bo}.

\begin{proof}[Proof of Theorem \ref{main_theorem_toy_version}.]
The condition on the minimal set $\mathcal{M}$ is the object of the next section. More precisely, 
in the next section
we will see that if $S$ is a coarsely tame surface of bounded geometry and 
$\mathcal{C}\subset T^1 S$
is closed and invariant under the
horocycle
flow, then either $\mathcal{C}=T^1S$ or there exist a $t_0>0$ such that
$g_{t_0}(\mathcal{C})\cap \mathcal{C}\neq \emptyset$. Taking $\mathcal{M}$ as in
Proposition \ref{proposition:geo_flow_not_a_suspension} and $\mathcal{C}$ to be its intersection with
the unit tangent bundle to a leaf,
we obtain Theorem \ref{main_theorem_toy_version}.
\end{proof}

\section{The horocycle flow on coarsely tame surfaces of bounded geometry} \label{Skey}

Let $S$ be a coarsely tame  hyperbolic surface of bounded geometry.

Recall that the dense horocyclic trajectories on $T^1S$ are characterized as follows
(see \cite[Theorem 3.1]{Dal'Bo}). Let $\tilde u\in T^1\H$, and denote $\tilde u(+\infty)\in\partial\H$ the extremity of the geodesic ray defined by
$\tilde u$. The following conditions are equivalent:
\begin{enumerate}
 \item  Any horodisc centered at $\tilde u(+\infty)$ contains points of the orbit $\Gamma\zeta$ for some
 $\zeta\in\H$ (i.e. $\tilde u(+\infty)$ is horocyclic).
 \item The horocyclic orbit of the projected point in $T^1S$ is dense.
 \end{enumerate}
Using this characterization, we see that $u\in T^1S$ has a dense horocyclic orbit whenever $g_{\R^+}(u)$ accumulates
in $T^1S$.

\begin{namedlemma}[Key]
\label{key_lemma}
Let $S$ be a hyperbolic surface and $u\in T^1S$. If there is a sequence $\{ \alpha_n \}_{n \geq 1}$
of closed geodesics in $S$ of lengths $\ell(\alpha_n)\in [a,b]$, with $b \geq a>0$, and a sequence of times
$\{ t_n \}_{n\geq 1}$
such that $t_n\longrightarrow+\infty$ and $\pi(g_{t_n}(u))\in\alpha_n$, then there exists a
time $t_0>0$ such that
$$g_{t_0}(u)\in \overline{h_{\!_\R}\!(u)}.$$
\end{namedlemma}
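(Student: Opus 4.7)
The plan is to lift $u$ to $\tilde u\in T^1\H$ and to construct sequences $(\gamma_n)\subset\Gamma$ and $(s_n)\subset\R$ such that $\gamma_n h_{s_n}\tilde u\to g_{t_0}(\tilde u)$ in $T^1\H\cong\PSL$ for some $t_0>0$; projecting to $T^1S=\Gamma\backslash\PSL$ then yields $g_{t_0}(u)\in\overline{h_\R(u)}$. I work in the upper half-plane model, normalized so that $\pi(\tilde u)=i$ and $\tilde u(+\infty)=\infty$, so the geodesic ray through $\tilde u$ is the positive imaginary axis. By hypothesis, each crossing with $\alpha_n$ lifts to a point $p_n=ie^{t_n}$ on the axis $A_n\subset\H$ of a hyperbolic $\gamma_n\in\Gamma$ with translation length $\ell_n:=\ell(\gamma_n)\in[a,b]$. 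Since $A_n$ is a Euclidean semicircle through $(0,e^{t_n})$, its endpoints $p_n^\pm\in\R$ satisfy $p_n^+p_n^-=-e^{2t_n}$.

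The decisive step is a renormalization: set $M_n:=g_{-t_n}\gamma_n g_{t_n}\in\PSL$. Each $M_n$ is hyperbolic of translation length $\ell_n$ whose axis passes through $i$ with endpoints $p'_n,q'_n\in\R$ satisfying $p'_n q'_n=-1$, so the normalized sequence lies in a compact family of configurations. Writing $M_n=T_n\,g_{\ell_n}\,T_n^{-1}$ where $T_n(0)=p'_n$ and $T_n(\infty)=q'_n$, a direct computation gives
\[
M_n=\begin{pmatrix}a_n & b_n\\ c_n & d_n\end{pmatrix},\qquad b_n=c_n=\frac{2\sinh(\ell_n/2)}{q'_n-p'_n},\qquad a_n+d_n=2\cosh(\ell_n/2).
\]
The constraint $q'_n\,|p'_n|=1$ forces $q'_n+|p'_n|\ge 2$ by AM--GM, so all four entries of $M_n$ are uniformly bounded in $n$.

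Now set $s_n:=-e^{t_n}b_n/a_n$. Using the commutation $g_{-t_n}h_s=h_{se^{-t_n}}g_{-t_n}$, one has $\gamma_n h_{s_n}=g_{t_n}\bigl(M_n\,h_{-b_n/a_n}\bigr)g_{-t_n}$. The product $M_n\,h_{-b_n/a_n}$ has vanishing $(1,2)$-entry (from $a_n(-b_n/a_n)+b_n=0$) and $(2,2)$-entry equal to $1/a_n$ (from $\det M_n=1$), while conjugation by $g_{t_n}$ preserves the diagonal and scales the $(2,1)$-entry by $e^{-t_n}$. One therefore obtains
\[
\gamma_n h_{s_n}=\begin{pmatrix}a_n & 0\\ e^{-t_n}c_n & 1/a_n\end{pmatrix}.
\]
Extracting a subsequence along which $\ell_n\to\ell\in[a,b]$ and $(p'_n,q'_n)\to(p',q')$ in $\widehat{\R}^{\,2}$, the entry $a_n\to A$ and $c_n$ remains bounded, so the $(2,1)$-entry vanishes in the limit and $\gamma_n h_{s_n}\to\mathrm{diag}(A,1/A)=g_{t_0}$ with $t_0:=2\log A$.

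It remains to ensure $t_0>0$, i.e.\ $A>1$. Passing the trace identity $a_n+d_n=2\cosh(\ell_n/2)>2$ to the limit yields $A+D>2$, so at least one of $A,D$ is strictly greater than $1$. If $A\le 1$, then $D>1$, and I repeat the argument with $\gamma_n^{-1}\in\Gamma$ in place of $\gamma_n$: since $(M_n^{-1})_{11}=d_n$, the identical computation produces $\gamma_n^{-1}h_{s^*_n}\to g_{2\log D}$ with $2\log D>0$. The main obstacle I expect is precisely the identification of the correct renormalization: the original sequence $\gamma_n$ diverges in $\PSL$, and the role of the conjugation by $g_{-t_n}$ is to convert the recurrence-at-infinity provided by the hypothesis into a uniformly controlled family of hyperbolic configurations near $i$, which the horocyclic adjustment $h_{s_n}$ is strong enough to straighten into a diagonal element.
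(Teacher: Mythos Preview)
Your argument is correct and takes a genuinely different route from the paper's. The paper reformulates the goal as showing that suitable elements $\gamma_n'=\gamma_n^k$ satisfy $\gamma_n'\infty\to\infty$ together with two-sided bounds on the Busemann cocycle $B_{\gamma_n'\infty}(i,\gamma_n' i)$; these bounds are obtained by splitting the cocycle into three pieces and estimating each geometrically via distance and angle comparisons (including a cited triangle inequality with defect $\log 2$), which yields an interval $[ka-2\log 2,\,kb]$ after passing to a power $\gamma_n^k$. You instead renormalise by the conjugation $M_n=g_{-t_n}\gamma_n g_{t_n}$, which places the axis through $i$ and forces the endpoints to satisfy $p_n'q_n'=-1$; the AM--GM bound $|q_n'-p_n'|\ge 2$ then pins all entries of $M_n$ inside $[e^{-b/2},e^{b/2}]$ (resp.\ $[-\sinh(b/2),\sinh(b/2)]$), and a two-line $SL_2$ computation gives the exact lower-triangular form of $\gamma_n h_{s_n}$, whose off-diagonal entry is killed by the factor $e^{-t_n}$.

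What your approach buys is economy and transparency: no orientation convention on the axes, no external comparison lemma, and the positivity of $t_0$ is secured by the dichotomy $A>1$ or $D>1$ (handled by passing to $\gamma_n^{-1}$) rather than by taking high powers. What the paper's approach buys is robustness: its estimates are phrased purely in terms of distances, angles and Busemann functions, so they extend verbatim to surfaces of variable pinched negative curvature (as the authors note after the proof), whereas your explicit matrix identities rely on the identification $T^1\H\cong\PSL$ specific to constant curvature $-1$.
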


\begin{proof}

The universal cover of $S$ is the hyperbolic plane $\H$, and its unit tangent bundle is $T^1\H$. If $\Gamma=\pi_1(S)$,
we write
$S=\Gamma\backslash \H$. We lift $u$ to $\tilde u\in T^1\H$, that defines a geodesic ray $r$.
Since each geodesic $\alpha_n$ is closed on $S$, it lifts to a geodesic $\tilde\alpha_n$ on $\H$ which is the axis  of a
hyperbolic element $\gamma_n\in \Gamma$. Let $\gamma_n^+$ and $\gamma_n^-$ be the attracting and repelling fixed points of $\gamma_n$ on the boundary at infinity
of $\H$ (see Figure~\ref{axes}).

\begin{figure}
\begin{tikzpicture}[line cap=round,line join=round,>=triangle 45,x=1.0cm,y=1.0cm,scale=0.6]
\clip(-9.3,-1.2) rectangle (6.94,8.6);
\draw [shift={(0,5.29)},color=zzzzzz,fill=zzzzzz,fill opacity=0.1] (0,0) -- (90:0.6) arc (90:163.34:0.6) -- cycle;
\draw [shift={(0,3.16)},color=zzzzzz,fill=zzzzzz,fill opacity=0.1] (0,0) -- (90:0.6) arc (90:161.22:0.6) -- cycle;
\draw [dash pattern=on 2pt off 2pt,domain=-9.3:6.94] plot(\x,{(-0-0*\x)/7});
\draw (0,0) -- (0,7.5);
\draw [shift={(-1.5,0)}] plot[domain=0:pi,variable=\t]({1*3.5*cos(\t r)+0*3.5*sin(\t r)},{0*3.5*cos(\t r)+1*3.5*sin(\t r)});
\draw [shift={(-1.5,0)}] plot[domain=0:pi,variable=\t]({1*5.5*cos(\t r)+0*5.5*sin(\t r)},{0*5.5*cos(\t r)+1*5.5*sin(\t r)});
\draw [->] (0,5.29) -- (-1.9,5.86);
\draw [->] (0,3.16) -- (-1.64,3.72);
\draw [->] (0,1.82) -- (0,2.84);
\draw [fill=xdxdff] (0,1.82) circle (2pt);
 \node (A) at (-6.7,-0.5) {$\gamma_{n+1}^+$};
 \node (B) at (-4.9,-0.5) {$\gamma_{n}^+$};
 \node (C) at (2.1,-0.5) {$\gamma_{n}^-$};
  \node (D) at (4.2,-0.5) {$\gamma_{n+1}^-$};
   \node (i) at (-0.4,2.2) {$\tilde{u}$};
   \node (p) at (0.8,3.4) {$r(t_n)$};
    \node (q) at (1.1,5.7) {$r(t_{n+1})$};
\end{tikzpicture}
\caption{The geodesic ray $r$ and the axes $\tilde\alpha_n$}
\label{axes}
\end{figure}
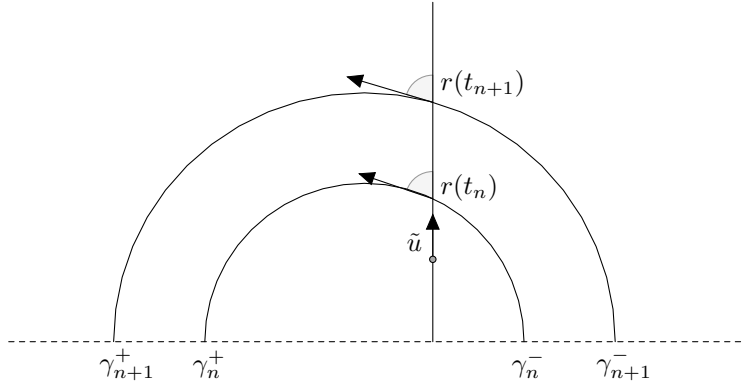


Taking the upper-half-plane model of the hyperbolic plane, we can assume that the geodesic ray directed by $\tilde u$ is
the vertical
half-line $r=\{ ie^t\, :\,t\geq 0\}$, and that the axes $\tilde\alpha_n$  are half-circles orthogonal to the real line.
By hypothesis, they intersect $r$.  Furthermore, we will assume without loss of generality that when we orient
$\tilde\alpha_n$ going from $\gamma_n^-$ to $\gamma_n^+$, the angle between $r$ and $\tilde\alpha_n$ is smaller or equal
than $\pi/2$ for all $n$. Let $r(t_n)$ be the point $ie^{t_n}$ where $r$ intersects
$\tilde\alpha_n$.

We will also denote by $g$ and $h$, respectively, the geodesic and horocycle flows in $T^1\H$.

Proving that there exists $t_0>0$ such that $g_{t_0}(u)\in \overline{h_{\!_\R}\!(u)}$ amounts to finding a sequence
$\{\gamma_n'\}_{n\geq 1}$ in $\Gamma$ and a sequence of times $s_n$ such that
$$\lim_{n\to +\infty}\gamma_n' h_{s_n}(\tilde u)= g_{t_0}(\tilde u).$$

Let $H$ be the horizontal line passing through $i$, that is, the projection to $\H$ of the horocycle orbit of the point $\tilde u$. The above condition simply states that $\gamma_n'(H)$ approaches the horizontal line passing through $ie^{t_0}$ (see Figure~\ref{horapproach}). Therefore, it suffices to prove that there exist a sequence $\{\gamma_n'\}_{n\geq 1}$ in $\Gamma$ and a constant $t_0>0$ such that
\begin{itemize}
\item[(i)~] $\gamma_n'\infty \to \infty$ and
\item[(ii)] $B_{\gamma'_n\infty}(i,\gamma'_ni)\longrightarrow t_0$,
\end{itemize}
where $B$ is the Busemann function given by $B_\xi(x,y)=\lim_{z\to \xi}[d(y,z)-d(x,z)]$.
 Notice that $B(x,y)\leq 0$ if and only if $y$ belongs to  the closed horoball centered at $\xi $ passing through $x$.
\medskip

We will prove (i) and (ii) for $\gamma_n'$ a convenient subsequence of iterates of $\gamma_n$.



We divide the proof in
several steps.

\medskip

\noindent {\em Step 1.} We will show that $\gamma_n\infty\longrightarrow \infty$.
Since $r(t_n)\to \infty$ and it belongs to the axis $\tilde\alpha_n$ of $\gamma_n$, one of the endpoints 
$\gamma_n^+$ or $\gamma_n^-$ of $\tilde\alpha_n$ goes to infinity. If there is a subsequence for which $\gamma_n^+\to\xi\neq\infty$, then 
$\gamma_n^-\to\infty$ and the angle at the point $r(t_n)$ between the ray $r$ and the oriented geodesic 
$\tilde\alpha_n$ would approach $\pi$ as $n$ grew. This is inconsistent with our assumption that this angle 
is bounded above by $\frac{\pi}{2}$. Therefore $\gamma_n^+\to\infty$.

Looking at the dynamics of $\gamma_n\in PSL(2,\R)$ acting on $\H\cup\partial\H$, we see that 
$\gamma_n\infty$ does not belong to the interval with endpoints $\gamma_n^+$  and $\gamma_n^-$, and therefore
$\gamma_n\infty\to\infty$ as stated.
\medskip

\begin{figure}
\centering
\definecolor{xdxdff}{rgb}{0.49019607843137253,0.49019607843137253,1.}
\begin{tikzpicture}[line cap=round,line join=round,x=1.0cm,y=1.0cm, scale=0.6]
\clip(-11,-1) rectangle (5,6.9);
\draw [dash pattern=on 2pt off 2pt,domain=-9.1:5.3] plot(\x,{(-0.-0.*\x)/8.});
\draw (0.,0.) -- (0.,6.68);
\draw [->] (0.,3.) -- (0.,4.);
\draw [domain=-9.1:5.3] plot(\x,{(--6.-0.*\x)/2.});
\draw [domain=-9.1:5.3] plot(\x,{(--9.696-0.*\x)/2.02});
\draw [->] (0.,4.8) -- (0.,5.8);
\draw(-4.3,2.68) circle (2.689014689435519cm);
\draw(-6.18,4.8) circle (4.800374985352707cm);
\begin{scriptsize}
\draw [fill=xdxdff] (0.,3.) circle (1.5pt);
\draw [fill=xdxdff] (0.,4.8) circle (1.5pt);
\draw [fill=xdxdff] (-3.813841500853314,5.32470223535797) circle (1.5pt);
\draw [fill=xdxdff] (-1.4151636471736069,5.383039047392259) circle (1.5pt);
\end{scriptsize}
\node (A) at (-0.3,2.6) {$i$};
\node (B) at (0.5,3.5) {$\tilde{u}$};
\node (C) at (-0.6,4.4) {$ie^{t_0}$};
\node (D) at (1,5.3) {$g_{t_0}(\tilde{u})$};
\node (E) at (4,4.3) {$g_{t_0}(H)$};
\node (F) at (4,2.5) {$H$};
\node (G) at (-4.1,5.8) {$\gamma'_ni$};
\node (G') at (-2.4,5.7) {$\gamma'_{n+1}i$};
\node (H) at (-7.5,3.7) {$\gamma'_nH$};
\node (I) at (-8.4,1.6) {$\gamma'_{n+1}H$};
\node (J) at (-4.2,-0.5) {$\gamma'_n\infty$};
\node (K) at (-6.4,-0.5) {$\gamma'_{n+1}\infty$};
\end{tikzpicture}
\caption{Horocycles $\gamma'_nH$ approaching $g_{t_0}(H)$}
\label{horapproach}
\end{figure}
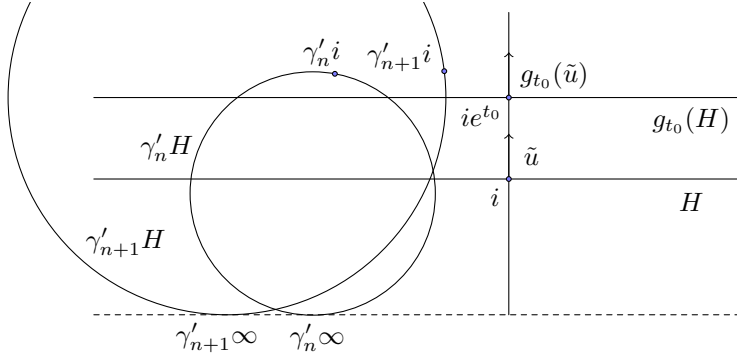

\noindent {\em Step 2.} We will show that 
$$B_{\gamma_n\infty}(i,\gamma_n i)=B_\infty(\gamma_n^{-1} i,i)\leq  b.$$
We have that
$$B_\infty(\gamma_n^{-1} i,i)=B_\infty(\gamma_n^{-1}i,\gamma_n^{-1}r(t_n))+B_\infty(\gamma_n^{-1}r(t_n),r(t_n))+B_\infty(r(t_n),i).$$
Computing these three terms gives
\begin{list}{\labelitemi}{\leftmargin=5pt}
 \item[(1)] $B_\infty(r(t_n),i)=-t_n$,
 \item[(2)] $B_\infty(\gamma_n^{-1}r(t_n),r(t_n))\leq d(\gamma_n^{-1}r(t_n),r(t_n))\leq b$, since both $\gamma_n^{-1}r(t_n)$ and $r(t_n)$ 
 belong to
 the closed geodesic $\alpha_n$ the length of which is bounded by $b$, and
 \item[(3)] $B_\infty(\gamma_n^{-1}i,\gamma_n^{-1}r(t_n))\leq d(\gamma_n^{-1}i,\gamma_n^{-1}r(t_n))=
 d(i,r(t_n))=t_n$.
\end{list}
Therefore
$$B_{\gamma_n\infty}(i,\gamma_n i)\leq -t_n+b+t_n=b.$$
Notice that, using the same proof, we obtain that for $k>0,\ B_{\gamma_n\infty}(i,\gamma_ni)\leq kb.$

\medskip
\noindent {\em Step 3.} We would like to show that there exists $C>0$ such that
$$B_{\gamma_n\infty}(i,\gamma_n i)\geq C.$$
In fact we will see that there is a positive $k$ such that for every $n$
$$B_{\gamma^k_n\infty }(i,\gamma^k_n i)\geq C.$$

As before, we consider the decomposition
$$B_\infty(\gamma_n^{-1}i,i)=B_\infty(\gamma_n^{-1}i,\gamma_n^{-1}r(t_n))+B_\infty(\gamma_n^{-1}r(t_n),r(t_n))+B_\infty(r(t_n),i),$$
and we will compute each of these three terms.

For the first term we
consider the geodesic ray $c_n$ going from $r(t_n)$ to $\gamma_n\infty$. We have that
$$B_{\gamma_n\infty}(i,r(t_n))=\lim_{t\to+\infty}d(i,c_n(t))-d(r(t_n),c_n(t)).$$
Take the geodesic path from $\gamma_n^+$ to $r(t_n)$ and the one from $r(t_n)$ to $i$. 
They form an angle greater or equal to $\pi/2$ (see Figure~\ref{angles}).Then the geodesic path
from $c_n(t)$ to $r(t_n)$ and the one from $r(t_n)$ to $i$ form an angle 
 greater than $\pi/2$ (see Figure~\ref{angles}).

Recall that hyperbolic triangles have the following property, that can be found in \cite{Sambusetti}: 
\begin{property}
 Let $\theta\in (0,\pi]$ and $d(\theta)=\log (2/(1-\cos\theta))$. If $ABC$ is a geodesic triangle in $\mathbb{H}$, such that 
 $\widehat{ACB}\geq \theta$, then 
 $$d(A,B)\geq d(A,C)+d(C,B)-d(\theta).$$
 In particular, if $\theta\geq \pi/2$, $d(A,B)\geq d(A,C)+d(C,B)-\log(2)$.
\end{property}

Therefore 
$$d(i,c_n(t))\geq d(i,r(t_n))+d(r(t_n),c_n(t))-\log (2)=t_n+t-\log (2),
$$
and this proves that
$$B_{\gamma_n\infty}(i,r(t_n))\geq t_n- \log(2).$$

\begin{figure}
\definecolor{uququq}{rgb}{0.25,0.25,0.25}
\definecolor{qqwuqq}{rgb}{0.13,0.13,0.13}
\definecolor{xdxdff}{rgb}{0.66,0.66,0.66}
\begin{tikzpicture}[line cap=round,line join=round,>=triangle 45,x=1.0cm,y=1.0cm, scale=0.5]
\clip(-9.68,-1.34) rectangle (6.34,8.26);
\draw [shift={(-0.01,4.98)},line width=0.4pt,color=qqwuqq,fill=qqwuqq,fill opacity=0.1] (0,0) -- (153.41:0.8) arc (153.41:270.17:0.8) -- cycle;
\draw [shift={(-0.01,4.98)},color=qqwuqq,fill=qqwuqq,fill opacity=0.15] (0,0) -- (167.53:0.5) arc (167.53:270.17:0.5) -- cycle;
\draw [shift={(-0.01,4.98)},line width=0.4pt,color=qqwuqq,fill=qqwuqq,fill opacity=0.2] (0,0) -- (-26.59:0.5) arc (-26.59:89.74:0.5) -- cycle;
\draw [shift={(-2.63,0)},line width=0.4pt]  plot[domain=0: pi,variable=\t]({1*5.63*cos(\t r)+0*5.63*sin(\t r)},{0*5.63*cos(\t r)+1*5.63*sin(\t r)});
\draw [shift={(-2.63,0)},line width=1.2pt]  plot[domain=1.09: pi,variable=\t]({1*5.63*cos(\t r)+0*5.63*sin(\t r)},{0*5.63*cos(\t r)+1*5.63*sin(\t r)});
\draw [line width=0.4pt] (0,0)-- (0,2);
\draw [dash pattern=on 2pt off 2pt,domain=-9.68:6.34] plot(\x,{(-0-0*\x)/8.26});
\draw [shift={(-1.14,0)},line width=0.4pt]  plot[domain=0: pi,variable=\t]({1*5.1*cos(\t r)+0*5.1*sin(\t r)},{0*5.1*cos(\t r)+1*5.1*sin(\t r)});
\draw [shift={(-1.13,0)},line width=1.2pt]  plot[domain=1.35:3.14,variable=\t]({1*5.11*cos(\t r)+0*5.11*sin(\t r)},{0*5.11*cos(\t r)+1*5.11*sin(\t r)});
\draw [line width=1.2pt] (0,2) -- (0,8.26);
\draw [shift={(-4.48,0)},dotted]  plot[domain=0.42:1.8,variable=\t]({1*4.91*cos(\t r)+0*4.91*sin(\t r)},{0*4.91*cos(\t r)+1*4.91*sin(\t r)});
\draw [shift={(-2.8,0)},dotted]  plot[domain=0.62:3.14,variable=\t]({1*3.44*cos(\t r)+0*3.44*sin(\t r)},{0*3.44*cos(\t r)+1*3.44*sin(\t r)});
\draw [shift={(-8.28,-0.11)},dotted]  plot[domain=0.29:0.71,variable=\t]({1*10.88*cos(\t r)+0*10.88*sin(\t r)},{0*10.88*cos(\t r)+1*10.88*sin(\t r)});
\draw [fill=xdxdff] (-0.01,4.98) circle (2pt);
\draw [fill=xdxdff] (-5.61,4.78) circle (2pt);
\draw[fill=xdxdff] (0,2) circle (2pt);
\draw [fill=xdxdff] (2.14,2.98) circle (2pt);
\draw [fill=xdxdff]  (0,6.94) circle (2pt);
\node (A) at (-8.2,-0.5) {\scriptsize $\gamma_n\infty$};
\node (B) at (-6.1,-0.5) {\scriptsize $\gamma^+_n$};
\node (C) at (-6.5,5) {\scriptsize $c_n(t)$};
\node (D) at (0.4,2) {\scriptsize$ i$};
\node (E) at (1.4,5.2) {\scriptsize$ r(t_n)$};
\node (F) at (0.8,7) {\scriptsize$ r(t)$};
\node (e) at (3.4,3.3) {\scriptsize$ \gamma_n^{-1}r(t_n)$};
\end{tikzpicture}
\caption{Angles in Step 3}
\label{angles}
\end{figure}
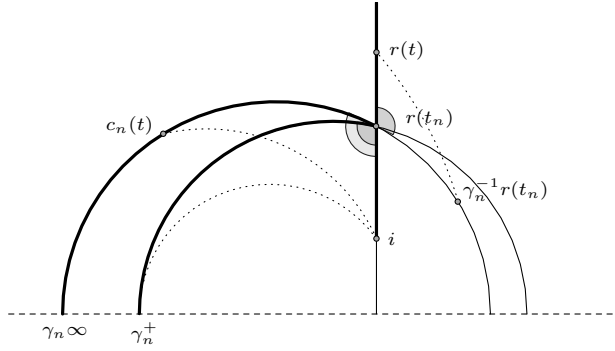

Now we will deal with the second term.

$$B_\infty( \gamma_n^{-1}r(t_n),r(t_n))=\lim_{t\to\infty}d(r(t),\gamma_n^{-1}r(t_n))-d(r(t),r(t_n)).$$
Remark that $d(r(t),r(t_n))=t-t_n$.

The angle formed by the geodesic path from $\gamma_n^{-1}r(t_n)$ to $r(t_n)$ and the one from $r(t_n)$ to $r(t)$ is greater or equal 
to $\pi/2$ (see Figure~\ref{angles}). Therefore,  we have
$$d(\gamma_n^{-1}r(t_n),r(t))\geq d(\gamma_n^{-1}r(t_n),r(t_n))+(t-t_n)- \log(2).$$
We also know that $d(\gamma_n^{-1}r(t_n),r(t_n))=length(\alpha_n)\geq a$, so we have that
$$d(\gamma_n^{-1}r(t_n),r(t))-(t-t_n)\geq a - \log(2).$$

As before, the third term is equal to $t_n$.

Putting everything together, we have that
$$B_{\gamma_n\infty}(i,\gamma_n i)=B_\infty(\gamma_n^{-1}i,i)\geq t_n- \log(2)+a- \log(2)-t_n=a-  2\log(2).$$

To obtain the desired conclusion, it would be enough to verify that
$$
a- 2\log(2)>0.
$$
This might not hold, but if we
replace $\gamma_n$ with $\gamma_n^k$ for a sufficiently large $k$, we will get
$$B_{\gamma_n^k\infty}(i,\gamma_n^k i)\geq ka- 2\log(2)>0.$$

An appropriate subsequence of the $\gamma_n^k$ satisfies conditions (i) and (ii).

\end{proof}

Notice that the Key Lemma also holds if $S$ is a Riemannian surface whose sectional curvature is less or equal than $-1$.

\medskip

Using Remark \ref{remark:geodesic_rays} and Lemma \ref{key_lemma}, we get the following corollary.

\begin{corollary} \label{dichotomy}
Let $S$ be a coarsely tame hyperbolic surface of bounded geometry. Then,
for any tangent vector $u \in T^1 S$, either $\overline{h_{\!_\R}\!(u)} = T^1 S$ or
there is a real number $t_0 > 0$ such that $g_{t_0} (u) \in \overline{h_{\!_\R}\! (u)}$.
\end{corollary}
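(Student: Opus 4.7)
The plan is to apply the dichotomy of Remark~\ref{remark:geodesic_rays}(ii) to the projected forward geodesic orbit $r(t) = \pi(g_t(u))$, $t \geq 0$, and to handle the two resulting cases separately.

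First, I would deal with the case where $r$ stays in a compact region $K \subset S$. Then $g_{\R^+}(u) \subseteq T^1 K$, which is a compact subset of $T^1 S$, so the forward geodesic orbit of $u$ accumulates somewhere in $T^1 S$. The characterization of horocyclic points at $+\infty$ recalled at the opening of this section then yields $\overline{h_{\!_\R}\!(u)} = T^1 S$, the first alternative of the corollary.

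Otherwise, Remark~\ref{remark:geodesic_rays}(ii) provides a sequence of closed geodesics $\{\alpha_n\}_{n\geq 1}$ of lengths in $[a,b]$ which are crossed by $r$; choosing a crossing time $t_n \geq 0$ with $r(t_n) \in \alpha_n$ for each $n$, the key point is to pass to a subsequence along which $t_n \to +\infty$. Indeed, if all the $t_n$ were bounded by some $T$, the intersection points would all lie on the compact arc $r([0,T]) \subset S$, and this contradicts the local finiteness of the set of closed geodesics of length at most $b$ passing through a fixed compact subset of a hyperbolic surface of bounded geometry (a standard consequence of the proper discontinuity of the Fuchsian action together with the uniform lower bound on the injectivity radius). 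Once the subsequence is extracted, the Key Lemma applies verbatim and yields a real number $t_0 > 0$ with $g_{t_0}(u) \in \overline{h_{\!_\R}\!(u)}$, delivering the second alternative.

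The only step that demands any care is the extraction of the subsequence $t_n \to +\infty$ in the second case; once that is in place, the corollary is a straightforward combination of Remark~\ref{remark:geodesic_rays}(ii), the characterization of dense horocyclic orbits, and the Key Lemma.
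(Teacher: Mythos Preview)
Your proof is correct and follows exactly the approach the paper indicates (the paper merely says the corollary follows from Remark~\ref{remark:geodesic_rays} and the Key Lemma, without spelling out the two cases). Your extraction of the subsequence with $t_n\to+\infty$ via the local finiteness of short closed geodesics through a compact arc is precisely the detail the paper leaves implicit; note incidentally that this finiteness already follows from proper discontinuity of the Fuchsian action, so the bounded-geometry hypothesis is only used to supply the lower bound $a$ in Remark~\ref{remark:geodesic_rays}(ii).
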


 Notice that Corollary \ref{dichotomy} holds in the more general context where $S$ admits a pair of pants decomposition
such that any geodesic ray either stays in a compact region or intersects infinitely many closed geodesics whose length belongs
to some interval $[a,b]$, with $b\geq a >0$, depending on the ray. 
Surfaces for which there exists $b$ are called {\em weakly tame} in \cite{Sarig}.

Using this corollary, we retrieve a result due to S.Matsumoto (see \cite{Matsumoto}, \cite{Matsumoto2}) saying
that the horocycle flow on $T^1S$ admits no minimal sets.

To see this, let $S=\Gamma\backslash\H$ be a coarsely tame hyperbolic surface of bounded geometry,
and suppose that $\mathcal{C}\subset T^S$ is a minimal set. Since $S$ is non-compact, $C\neq T^1S$
(see \cite{Dal'Bo}).
Using Corollary \ref{dichotomy} we get $t_0$ such that $g_{t_0}(\mathcal{C})=\mathcal{C}$. On the other hand, if
$u\in \mathcal{C}$ and $\tilde u$ is a lift of $u$ to $T^1\H$, the point at infinity $\tilde u(+\infty)$ is
non-horocyclic. 
  For any $k\in\Z$, let $\zeta_k$ be the base point of $\tilde g_{kt_0}(\tilde u)$, where $\tilde g_\R$ denotes the geodesic 
flow. Notice that since $g_{kt_0}(u)\in \mathcal{C}$, there exist $(t_n)_{n\geq 0}$ in $\R$ and $(\gamma_n)_{n\geq 0}$ in $\Gamma$ 
such that $\gamma_n\tilde h_{t_n}(\tilde u)$ converges to $\tilde g_{kt_0}(u)$. This implies that 
$B_{\gamma_n(\tilde u(+\infty))}(\zeta_0,\gamma_n\zeta_0)=B_{\tilde u(+\infty)}(\gamma_n^{-1}\zeta_0,\zeta_0)$ converges to 
$B_{u(+\infty)}(\zeta_0,\zeta_k)=kt_0$. Hence $\Gamma_{\zeta_0}$ meets a horodisc centered at $\tilde u(+\infty)$--but this is not
the case, since $\tilde u(+\infty)$ is not horocyclic.

\section{Topology of leaves} \label{Sfol}

The purpose of this section is to show that there are in fact many foliations which are similar to the Hirsch foliation.
More precisely, we will prove Theorem \ref{reduction_theorem}, which has been stated in the Introduction.
\medskip

The proof of this theorem will make use of two preliminary results. The first one is about 
 simply connected leaves
and the second one is the construction of a certain type of uniformization of $(M,\cF)$. 
We also will make use of the following remark, which is in itself an
interesting fact about compact laminations by hyperbolic surfaces:

\begin{remark}

A compact lamination by hyperbolic surfaces has the property that no leaf has a cusp. 
This is because the injectivity radius of all leaves is bounded below by a positive constant. 
A more dynamical 
way to see it is the following:
If there were a leaf with a cusp, 
the 
horocycle flow on the unit tangent bundle would have a 
closed orbit $h$  
(a closed horocycle) and the geodesic flow
$g_t$ would have the property that 
the diameter of $g_t(h)$ converges to $0$. Taking a subsequence the 
set $g_{t_n}(h)$ would converge to a fixed point of the geodesic flow, which is a contradiction.
\end{remark}

\medskip


\begin{proposition} \label{vanishingcycles}
Let  $(M,\cF)$ be minimal compact lamination by hyperbolic surfaces.Then $\cF$ has no simply connected leaves if and only if there is 
a leaf that contains an essential loop with trivial holonomy.
\end{proposition}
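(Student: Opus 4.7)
The statement is an equivalence and I would prove each direction separately. One direction is a soft general fact about foliated spaces and the other is a geometric argument that uses the foliated hyperbolic structure.

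For the implication ``an essential loop with trivial holonomy exists $\Rightarrow$ no leaf is simply connected'' the plan is as follows. Let $L_0$ be a leaf carrying an essential loop $\gamma$ with trivial holonomy. First I would use the triviality of the holonomy germ along $\gamma$ to produce a saturated product neighborhood $V\cong S^1\times T$ of $\gamma$ in $M$, where $T$ is a small transversal through a point $p\in\gamma$ corresponding to $t_0\in T$, such that each slice $\gamma_t:=S^1\times\{t\}$ is a loop on the leaf $L_t$ through $(0,t)$ with $\gamma_{t_0}=\gamma$. Second, and this is the key technical point, I would show that $\gamma_t$ is essential on $L_t$ for every $t$ close enough to $t_0$. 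For this I would lift $\gamma$ to a path in the universal cover $\H\to L_0$ joining some $\tilde p$ to $g\tilde p$, with $g\in\pi_1(L_0)\setminus\{1\}$ the class of $\gamma$, and use the trivial holonomy to extend the lift to a continuous family $\tilde\gamma_t$ of lifts of $\gamma_t$ in $\H\to L_t$. The endpoints of $\tilde\gamma_t$ depend continuously on $t$ and are distinct at $t=t_0$, so they remain distinct for $t$ in a neighborhood of $t_0$; hence $\gamma_t$ is essential on $L_t$. Finally, minimality of $\cF$ ensures that every leaf meets the open sub-disk of $T$ on which $\gamma_t$ is essential, so every leaf contains an essential loop and is therefore not simply connected.

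For the reverse implication, I would invoke the classical theorem of Epstein--Millet--Tischler: in any foliated space the union $M_0\subset M$ of the leaves on which the holonomy homomorphism is (totally) trivial is a dense $G_\delta$. In particular $M_0\neq\emptyset$, so there is a leaf $L$ with trivial holonomy. By hypothesis $L$ is not simply connected, so it carries an essential loop $\gamma$, which tautologically has trivial holonomy.

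The main obstacle will be the continuity argument for the lifts $\tilde\gamma_t$. I would have to make precise the sense in which the universal covers of nearby leaves vary continuously; the crux is that triviality of holonomy along $\gamma$ provides a local trivialization of the foliated universal cover over a neighborhood of $\gamma$, so that the endpoints of the lifts really do move continuously with the transverse parameter. Everything else is soft topology together with the fact, guaranteed by minimality, that any open saturated set meets every leaf.
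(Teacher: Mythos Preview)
Your plan is correct and follows the same overall architecture as the paper's proof: the Epstein--Millet--Tischler theorem for one direction, and Reeb stability plus minimality for the other, the heart of the matter being to show that the nearby loops $\gamma_t$ remain essential on their leaves.

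The only genuine difference is in how that last point is handled. The paper first replaces $\gamma$ by the closed geodesic in its free homotopy class and then argues geometrically: for $t$ near $t_0$ the loop $c_t$ is $C^1$-close to a geodesic arc $\alpha_t$, and closing it up with a short segment $\beta_t$ produces a piecewise-geodesic loop freely homotopic to $c_t$; negative curvature then forces this loop to be essential. Your route, by contrast, is to track lifts to the universal covers $\H\to L_t$ and use that the endpoints of the lifted paths vary continuously and are distinct at $t_0$. This works, but note that the precise tool you need to make ``the foliated universal cover trivializes over a neighborhood of $\gamma$'' rigorous is exactly the continuous uniformization theorem of Candel and Verjovsky used elsewhere in the paper; you should cite it explicitly rather than leave it as an obstacle. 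The paper's argument avoids invoking that theorem at this stage and stays within elementary comparison geometry, while yours is more topological and would generalize to laminations by aspherical surfaces whose universal covers vary continuously, not just hyperbolic ones.
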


\begin{proof} Assume that a compact lamination $(M,\cF)$, which is not necessarily minimal, has no simply connected leaves. 
A theorem due to D.B.A. Epstein, K.C. Millet and D. Tischler \cite{EpsteinMilletTischler} and independently to G. Hector \cite{Hector} states 
that the leaves with trivial holonomy form a residual set. Since any leaf in this residual set is not simply connected, it must contain an 
essential loop which has obviously trivial holonomy. 
Reciprocally, assume that there is a leaf $L$ that contains an essential loop $c_0:\mathbb{S}^1\to{L}$  without holonomy. By the standard
length-shortening arguments $c$ is freely homotopic to a closed geodesic in $L$, and we can assume that it is in fact a closed geodesic
parametrized by arc length by a periodic function
$c_0:\R\to{L}$  of period $s_0$. Since it has trivial holonomy and the metric varies continuously from leaf to leaf
in the $C^\infty$ topology (although $C^3$ is enough), Reeb's stability theorem implies that 
for all $\varepsilon>0$, there exist a small transversal $T$ through the point $t_0=c_0(0)$ and a smooth map
$\hat c:\R \times T \to{M}$ such that:

\begin{enumerate}
\item $\hat c(s,t_0)=c_0(s)$ for all $s\in\R$.
\item $\hat c(s+s_0,t)=\hat c(s,t)$ for all  $s\in\R$, $t\in T$. Let $c_t(s)=\hat c(s,t)$.
\item For all $t$ in $T$ the curve $c_t:\R\to{M}$ has image in a leaf  $L_t$ of the  lamination passing through
$t\in T$.
\item For all $t$ in  $T-\{t_0\}$ the curve $c_t:\R\to{M}$ is closed and there is a
geodesic $\alpha_t$ in $L_t$ such that $d(\alpha_t(s),c_t(s))<\varepsilon$ for $s\in [0,s_0]$.
\end{enumerate}

Let us join the points $\alpha_t(s_0)$ and $\alpha_t(0)$ by the geodesic segment $\beta_t$
in $L_t$ that realizes the distance between them. If $\epsilon$ is sufficiently small, the concatenation of
 $\alpha_t\!\mid_{[0,s_0]}$ and $\beta_t$ is freely homotopic to $c_t$. Since $L_t$ has negative curvature, this
 means that $c_t$ is essential in $L_t$. Finally, as $\cF$ is minimal, any leaf of $L'$ meets the image of $\hat c$ and hence 
 there exists $t \in T$ such that $L' = L_t$ is not simply connected. 
\end{proof}

By construction, the continuous map $\hat c:\R \times T \to{M}$ factors over a continuous extension $c : \mathbb{S}^1\times T\to M$ of the 
closed geodesic $c_0$ so that the loop $c_t = c \mid_{\mathbb{S}^1\times\{t\}}$ is essential in the leaf $L_t$ for each $t \in T$. In fact, using again 
Reeb's stability theorem as above, the inclusion of a $\varepsilon$-tubular neighborhood $A$ of $c_0$ into $M$ extends to a continuous map 
$c : A \times T\to M$ such that $c(A \times \{t\})$ is a $\varepsilon$-tubular neighborhood of the essential loop $c_t$ into the leaf $L_t$ for all $t \in T$. Note the 
persistence of the closed geodesic in the free homotopic class of $c_t$ when the hyperbolic metric varies in the 
transverse direction. In fact, by the normal hyperbolicity of the geodesic flow in the sense of M.W. Hirsch, C.C. Pugh and M. 
Shub \cite{HirschPughShub}, the closed geodesic varies continuously in the $C^\infty$ topology, and we can finally assume that 
every essential loop $c_t$ is in fact a simple closed geodesic.

\begin{definition} Let  $(M,\cF)$ be a compact minimal lamination by hyperbolic
 surfaces. Then a smooth map $\Phi:D\times T \to{M}$ is a {\em uniform systole cylinder cover} if:
 \begin{enumerate}

 \item $D$ is diffeomorphic to a cylinder $\mathbb{S}^1\times(0,1)$.
 \item $\Phi$ is a local homeomorphism  (which is not necessarily a covering map) and $\Phi(D\times T)={M}$. 
 \item For each $t\in T$ the map $\Phi_t:D\to{M}$, $x\mapsto{\Phi(x,t)}$,
  has image in 
  the leaf  $L_t$ of the  lamination passing through $\Phi(x_0,t)$, where $x_0$
  is a fixed point of $D$, and
$\Phi_t:D\to \Phi_t(D)\subset L_t$ is a local diffeomorphism.
\item Let $g^*$ be the continuous leafwise Riemannian metric on $D\times T$ which is the pull-back
metric, under $\Phi$, of the continuous metric $g$ on $M$ which renders all leaves of $\cF$ hyperbolic.
Under these two metrics restricted to the leaves, $\Phi_t$
is a local isometry between $D\times\{t\}$ and its image in $L_t$.
\item For each $t\in T$ the surface $D\times\{t\}$ with the metric induced by $g^*$ has a unique closed geodesic
$c_t$,  the systole of the cylinder, and it is a tubular neighborhood of this geodesic whose fiber has uniformly
bounded length.

 \end{enumerate}

\end{definition}

\begin{theorem}\label{Uniform_systole_covering}
  Let  $(M,\cF)$ be a compact minimal lamination by hyperbolic
 surfaces. If there is an essential loop without holonomy, then $\cF$ admits a uniform systole cylinder
 cover.
\end{theorem}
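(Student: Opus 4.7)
The plan is to build $\Phi$ by taking, for each $t$ in a suitable transversal $T$, the Fermi parametrisation of a tube of fixed width around the lift of $c_t$ to the cyclic cover of the leaf $L_t$ along $[c_t]$. The family $\{c_t\}$ of simple closed geodesics is provided by Proposition~\ref{vanishingcycles} (refined by the normal hyperbolicity of the leafwise geodesic flow, as already used after its proof), and the minimality of $\cF$ will be invoked to guarantee that a single uniform width suffices to make the union of the resulting tubes cover $M$.

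More concretely, I would proceed as follows. Fix a relatively compact open transversal $T$ on which the continuous family $t\mapsto c_t$ constructed in Proposition~\ref{vanishingcycles} is defined; by minimality $T$ meets every leaf, and the compact set $K:=\bigcup_{t\in T}c_t\subset M$ intersects every leaf of $\cF$. For each $t\in T$, let $\pi_t:\tilde L_t\to L_t$ denote the covering of $L_t$ associated with the cyclic subgroup $\langle[c_t]\rangle\le\pi_1(L_t)$. Because $c_t$ is a simple closed geodesic, $\tilde L_t$ is a complete hyperbolic annulus whose unique closed geodesic $\tilde c_t$ has length $\ell(c_t)$. Pick a width $R>0$ and parametrise the Fermi neighbourhood of $\tilde c_t$ of width $R$ by the fixed cylinder $D=\mathbb{S}^1\times(-R,R)$ via
\[
\Psi_t(\xi,s)=\exp_{\tilde c_t(\ell(c_t)\xi)}\bigl(s\,n(\xi)\bigr),
\]
where $n$ is a continuous unit normal to $\tilde c_t$. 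Set $\Phi_t:=\pi_t\circ\Psi_t$ and assemble these into $\Phi:D\times T\to M$. The $C^\infty$ continuity of the leafwise metric together with the smooth persistence of $c_t$ makes $\Phi$ smooth along leaves and continuous transversally. Properties (1), (3), (4), (5) of the definition are then immediate: $D$ is a cylinder; each $\Phi_t$ is the composition of the diffeomorphism $\Psi_t$ with the local-isometric covering $\pi_t$; the pullback metric $g^*$ realises $\Phi_t$ as a leafwise local isometry; and $(D\times\{t\},g^*)$ is isometric to a Fermi tube in the hyperbolic annulus $\tilde L_t$, whose systole is the core $\mathbb{S}^1\times\{0\}$ and whose fibres have length $2R$.

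The main obstacle is property (2), namely $\Phi(D\times T)=M$ for a suitable uniform width $R$. This reduces to bounding the leafwise distance function $\rho(m):=d_{L_m}(m,K\cap L_m)$ on $M$. It is everywhere finite because $K$ intersects every leaf. To bound it, I would show that the sublevel set $\{\rho<R\}$ is open in $M$ by a foliated-chart argument: if $\rho(m_0)<R$, a leafwise geodesic of length strictly less than $R$ joining $m_0$ to $K$ can be transported through a finite chain of flow boxes to nearby leaves, and the $C^\infty$ continuity of the leafwise metric keeps its length below $R$ for all $m$ near $m_0$. Compactness of $M$ then yields a uniform such $R$, and choosing this value as the Fermi width above finishes the construction.
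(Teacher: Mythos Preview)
Your construction is correct and follows the same outline as the paper's: produce the continuous family of simple closed geodesics $c_t$ from Proposition~\ref{vanishingcycles}, pass to the cyclic cover of each leaf along $[c_t]$, take a tube of uniform width around the core, and invoke compactness of $M$ to fix the width. The one substantive difference is how transverse continuity of $\Phi$ is secured. The paper invokes the Candel--Verjovsky continuous leafwise uniformisation $F:\mathbb{D}\times T\to M$ and realises each cylinder as the quotient $\mathbb{D}/\langle\gamma_t\rangle$, so continuity in $t$ is inherited from that theorem; you build the cylinders directly with Fermi coordinates and appeal to the transverse $C^\infty$ regularity of the metric and the smooth persistence of $c_t$. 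Your route is more elementary in that it avoids the uniformisation theorem, at the cost of having to check continuity of the normal field and exponential map by hand; the paper's is shorter once the citation is granted. The two surjectivity arguments are the same: your open sublevel sets $\{\rho<R\}$ are precisely the images of the width-$R$ tubes (since on the hyperbolic annulus $\tilde L_t$ the normal exponential is a global diffeomorphism, so $\Phi_t(D)$ is exactly the $R$-neighbourhood of $c_t$ in $L_t$), and both proofs conclude by compactness of $M$. One small correction: with $T$ open, the set $K=\bigcup_{t\in T}c_t$ need not be compact; either take the transversal to be compact or replace $K$ by its closure before running the $\rho$ argument.
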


\begin{proof}

 According to the proof of Proposition~\ref{vanishingcycles} and the subsequent discussion, 
 we have a continuous map 
 $c:\mathbb{S}^1\times T\to M$,
 where each $c_t$ is a simple closed geodesic on a leaf $L_t$.

Let $\mathbb{D}$ be the Poincar\'e disk with the Poincar\'e metric and $F:\mathbb{D}\times T\to{M}$
be the global uniformization map (see \cite{Candel} and \cite{Verjovsky}), i.e. the continuous
map that satisfies the following conditions:
\begin{enumerate}
\item The restriction $F_t$ of $F$ to $\mathbb{D}\times \{t\}$ is a uniformization of $L_t$,
i.e. it is a covering map
which is a local isometry between the Poincar\'e disc and the hyperbolic leaf $L_t$.
\item $F(0,t)=c_t(1)$  and   $\frac{d~}{dz}(F(z,t))|_{z=0}=\dot{c}_t(1)$.
\end{enumerate}

Let $\tilde c_t$ be the lift of $c_t$ which passes through $0\in \mathbb{D}$, and
$\gamma_t\in PSL(2,\R)$ be the M\"obius transformation of $\mathbb{D}$ that has axis $\tilde{c}_t$ and such that
$c_t$ is the quotient of $\tilde c_t$ by the cyclic group generated by $\gamma_t$. Remark that $\gamma_t$
varies continuously with $t$.

We define the skew-cylinder
$$N=\mathbb{D}\times T/\sim$$
where $(z,t)\sim (\gamma_tz,t)$, with the induced Riemannian metric along the $z$-direction.

Then the uniformization map $F$ can be factorized through $N$
$$
\xymatrix{
& \mathbb{D} \times T   \ar[dr]_F \ar[rr]^\pi & & N \ar[dl]^\varphi  & \hspace{-2.8em} =\mathbb{D}\times T/\sim \\
 && M & &
}
$$
verifying
\begin{enumerate}
\item There exists a diffeomorphism $f:\mathbb{S}^1\times \R\times T\to N$ which
sends $\mathbb{S}^1\times\{0\}\times \{t\}$ onto the systole $c_t$.
\item The map $\varphi$ is a local homeomorphism; in particular it is an open map.
\item Restricted to $\mathbb{D}\times \{t\}$ and $\pi(\mathbb{D}\times \{t\})$ respectively,
both $\pi$ and $\varphi$ are locally isometric covering maps.
\end{enumerate}

For every $k\in \N$, take $N_k=f(\mathbb{S}^1\times (-k,k)\times T)$. Its image under $\varphi$
is open in $M$, and $\{\varphi(N_k):\ k\in \N\}$ is a cover of $M$. Since $M$ is compact, there exists
a $k_0$ for which $\varphi(N_{k_0})=M$, and therefore the restriction of $\varphi$ to
$N_{k_0}$ defines a uniform systole cylinder cover
$$
\Phi: D\times T= \mathbb{S}^1\times (-k_0,k_0) \times T \xrightarrow{f}   N_{k_0} \xrightarrow\varphi  M.
$$
\end{proof}

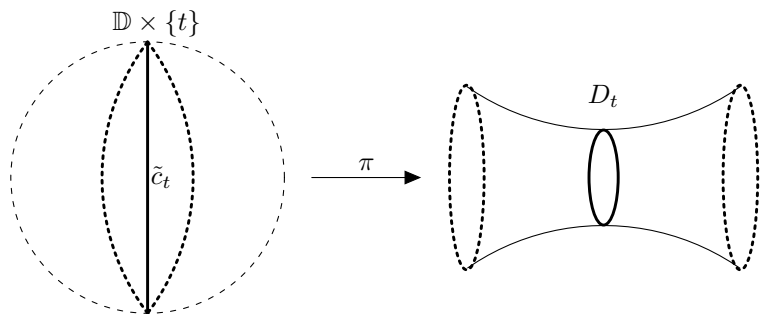
\begin{figure}
\begin{tikzpicture}[line cap=round,line join=round,>=triangle 45,x=1.0cm,y=1.0cm,scale=0.6]
\clip(-3,-3) rectangle (13.5,4);
\draw [dash pattern=on 2pt off 3pt] (0,0) circle (3cm);
\draw [line width=1.1pt] (0,3)-- (0,-3);
\draw [shift={(4,0)},line width=1.1pt,dotted]  plot[domain=2.5:3.79,variable=\t]({1*5*cos(\t r)+0*5*sin(\t r)},{0*5*cos(\t r)+1*5*sin(\t r)});
\draw [shift={(-4,0)},line width=1.1pt,dotted]  plot[domain=-0.64:0.64,variable=\t]({1*5*cos(\t r)+0*5*sin(\t r)},{0*5*cos(\t r)+1*5*sin(\t r)});
\draw [rotate around={90:(10,0)},line width=1.1pt] (10,0) ellipse (1.05cm and 0.32cm);
\draw [shift={(10,6.27)}] plot[domain=4.1:5.33,variable=\t]({1*5.21*cos(\t r)+0*5.21*sin(\t r)},{0*5.21*cos(\t r)+1*5.21*sin(\t r)});
\draw [shift={(10,-6.27)}] plot[domain=0.96:2.18,variable=\t]({1*5.21*cos(\t r)+0*5.21*sin(\t r)},{0*5.21*cos(\t r)+1*5.21*sin(\t r)});
\draw [rotate around={90:(7,0)},line width=1.1pt,dotted] (7,0) ellipse (2.04cm and 0.38cm);
\draw [rotate around={90:(13,0)},line width=1.1pt,dotted] (13,0) ellipse (2.04cm and 0.38cm);
\draw[->] (3.6,0)-- (6,0);
\node (A) at (0.2,3.4) {$\mathbb{D} \times \{t\}$}; 
\node (B) at (0.3,0) {$\tilde{c}_t$}; 
\node (C) at (4.8,0.3) {$\pi$}; 
\node(D) at (10,1.8){$D_t$};
\end{tikzpicture}
\caption{A systole cylinder}
\label{systole}
\end{figure}
 \bigskip

 We are now in a position to prove Theorem \ref{reduction_theorem}.

 \begin{proof}[Proof of Theorem \ref{reduction_theorem}] The equivalence $(i) \Leftrightarrow (ii)$ has been proved in 
 Proposition~\ref{vanishingcycles}, and now we will prove $(ii)\Rightarrow (iv)\Rightarrow (iii) \Rightarrow (ii)$.

  \medskip

  \noindent{$(i) \Rightarrow (iii)$}
   Since $\cF$ is minimal, it is clear that all its leaves are noncompact. We wish to prove that there are positive constants $a$ and $b$ such
   that if $L$ is a leaf of $\cF$ and $r$ is any geodesic ray going to infinity in $L$, then
   $r$ must intersect infinitely many closed geodesics with lengths between $a$ and $b$. This is
   equivalent to proving that any geodesic ray going to infinity in $L$ intersects one geodesic
   with length between $a$ and $b$.

  \medskip

   We start by constructing a uniform systole cylinder cover and later we proceed in three steps. Indeed, 
   under the hypotheses that $\cF$ is minimal and that it has a leaf containing an essential loop without holonomy,
   Theorem \ref{Uniform_systole_covering} gives us a uniform systole cylinder cover $\Phi:D\times T\to M$,
   where, for every $t\in T$, $D_t=D\times\{t\}$ is an (incomplete) hyperbolic cylinder which is a tubular
   neighborhood of its unique closed geodesic. The boundary components of $\partial D_t$ are curves
   of constant geodesic curvature
   greater than 0 and less than 1. When lifted to the hyperbolic plane, they give two hypercycles,
   one at each side of the lift of the systole of $D_t$, joining its points at infinity (see Figure~\ref{systole}).
   The lengths of the systole and each of the boundary components of $D_t$ are
   bounded by two positive constants
   $a$ and $b$ which can be chosen uniformly
   for all $t$.
   In fact, we will simply take $a$ to be the lower bound of
   the injectivity radius on $\cF$.

  \medskip

   \noindent
   {\em Step 1}. In the first step, each leaf $L$ is covered by the countable family of surfaces
   $$\mathcal{U}_L=\{\Phi(D_t):\ L=L_t\}.$$
   Each element $S\in \mathcal{U}_L$ is an open hyperbolic surface
   of finite genus,
   which is relatively compact in $L$.
   Since $\Phi |_{D_t}$ arises from the uniformization covering $F_t : \mathbb{D} \to L_t$, its boundary is a union of finitely many 
   piecewise smooth simple
   closed curves contained in the images of 
   boundary components of $D_t$.
    We will actually refer to each of these by the term `boundary component' of $S$  even if in general the connected components of the 
    boundary are non-simple curves composed of several of these `boundary components' .
    Notice that  even if some boundary components of $S$ can be inessential in $L$, there are always 
    essential boundary components of $S$ in $L$.
    
  \medskip

   \noindent
   {\em Step 2}. Let $r$ be a geodesic ray that goes to infinity on $L$. It must intersect infinitely many of the surfaces
   $S\in\mathcal{U}_L$. In this second step, will see that for infinitely many of them,
   there is at least one boundary component of $S$ whose algebraic intersection
   number with $r$ is not zero. To define the intersection number of a ray with a closed
   curve, we must remove from $L$ the starting point of the ray and consider the intersection number
   in this new noncompact surface. We proceed in two stages:

  \medskip

   \noindent
   {\em Step 2.1}. First, we fix a surface $S\in \mathcal{U}_L$ containing the starting point of $r$ and we assume that the intersection number 
   of $r$ with every 
   boundary component of $S$ is zero. Let $C$ be the first boundary component of $S$ that is intersected by $r$ with the orientation induced by 
   the orientation 
   of $S$. Consider a parametrization $r:[0,+\infty)\to L$  of the ray $r$. Since $r$ intersects $C$, 
   it does so a finite number of times $\{t_1,\ldots,t_{2n}\}$, having intersection number $+1$ for $n$ times (including $t_1$) and intersection 
   number $-1$ for 
   other $n$ times.
   
   There are two consecutive parameter $t_i$ and $t_{i+1}$ where the intersection numbers have opposite sign. 
   Then we substitute $r$ for the concatenation of $r\!\mid_{[0,t_i]}$, $\beta_i$ and 
   $r\!\mid_{[t_{i+1},+\infty)}$, where $\beta_i$ is a segment of $C$ that joins $r(t_i)$ and 
   $r(t_{i+1})$. We have thus reduced the number of intersection points of the ray $r$ with the boundary component $C$. 
   Arguing inductively, we obtain a new curve that does not cut $C$ but that still goes to infinity. 
   Until it meets another boundary component of $S$, the new curve remains in the closure of $S$. 
   Since we can do the same for the next boundary component of $S$ that is intersected by $r$, we can inductively construct a curve that goes to the same 
   point at infinity as $r$ remaining in the closure of $S$,  which is absurd.

  \medskip

   \noindent
   {\em Step 2.2}. In general, $L$ is covered by an increasing sequence of relatively compact open hyperbolic surfaces which are made 
    up of finitely many surfaces in $\mathcal{U}_L$. Repeating the same argument used for $S$ for each of those surfaces, we deduce that the algebraic
    intersection of $r$ with at least one boundary component is not zero for infinitely many surfaces in $\mathcal{U}_L$. 
    
  \medskip

   \noindent
   {\em Step 3}.  In the final step, we consider a surface $S \in\mathcal{U}_L$ such that: 
   
   \begin{enumerate}
   \item its boundary is far from the starting point of $r$ in the sense that no boundary component of $S$ meets an open ball $B$ centered at this point,     
   \item the intersection number of $r$ with a boundary component $C$ of $S$ is not zero. 
   \end{enumerate}
   Then the surface which is obtained by removing the starting point admits a complete Riemannian metric having the same closed geodesics of length $\leq b$
    than the original one outside of  the neighborhood $B$ of the new end. Then $C$ is freely homotopic to a closed geodesic $\alpha$,
   and the ray $r$ must intersect it since their intersection number is still nonzero. Furthermore, since the
   length of $C$ is between $a$ and $b$ and $\alpha$ cannot be longer than $C$, we have proved that $r$
   intersects a closed geodesic of length between $a$ and $b$, as desired.

   This shows that $L$ is coarsely tame with bounded geometry.

    \medskip
   \noindent{$(iii)\Rightarrow (ii)$}
   Any surface which is coarsely tame with bounded geometry is geometrically infinite, so in fact
   all leaves are geometrically infinite.

  \medskip

   \noindent{$(ii) \Rightarrow (i)$}
   Let $L$ be a leaf which is geometrically infinite and take $x\in L$. Its fundamental group
   $\pi_1(L,x)$ is not finitely generated,
   and its holonomy group $Hol(L,x)$ is a finitely generated quotient of   $\pi_1(L,x)$.
   The non-trivial kernel of $\pi_1(L,x)\twoheadrightarrow Hol(L,x)$ is precisely the set
   of homotopy classes of loops  based at $x$ without holonomy.

 \end{proof}

\bibliography{Hirsch_nov2015}{}
\bibliographystyle{plain}

\end{document}